\documentclass[11pt,a4paper]{amsart}
\usepackage[margin=2.6cm]{geometry}
\usepackage{amssymb,booktabs,enumitem,longtable}
\usepackage[hidelinks]{hyperref}
\newtheorem{theorem}{Theorem}[section]
\newtheorem{proposition}[theorem]{Proposition}
\newtheorem{lemma}[theorem]{Lemma}
\newtheorem{corollary}[theorem]{Corollary}

\newtheorem{conjecture}[theorem]{Conjecture}
\theoremstyle{definition}
\newtheorem{definition}[theorem]{Definition}
\theoremstyle{remark}
\newtheorem{remark}[theorem]{Remark}

\newcommand{\s}{\mathsf s}
\newcommand{\F}{\mathbb F}
\newcommand{\Z}{\mathbb Z}
\newcommand{\supp}{\operatorname{supp}}
\newcommand{\conv}{\operatorname{conv}}

\title[The EGZ constant of $C_n^r$ for even $n$]{The Erd\H{o}s--Ginzburg--Ziv constant of $C_n^r$ for even $n$:
rigid sequences, products, and growth beyond Harborth's bound}
\author{Christian Elsholtz}
\address{Institut f\"ur Analysis und Zahlentheorie, Technische Universit\"at Graz, Kopernikusgasse 24,
8010 Graz, Austria}
\email{elsholtz@math.tugraz.at}
\subjclass[2020]{11B30, 11P70, 20K01, 05B25}
\keywords{Erd\H{o}s--Ginzburg--Ziv constant, zero-sum sequences, Property D, Property C, product constructions}

\begin{document}

\begin{abstract}
Let $\s(C_n^r)$ be the Erd\H{o}s--Ginzburg--Ziv constant of $C_n^r$ and $\eta(C_n^r)$ the short zero-sum constant.
For even $n$, Harborth's construction gives $\s(C_n^r)\ge2^r(n-1)+1$, with equality when $n$ is a power of two.
Wang and Zeng recently showed that this bound is strict for every $r\ge4$ and every even $n$ that is not a power of
two, with an excess of $2^{r-3}(m-1)$, where $m$ is the odd part of $n$; this disproves Properties C and D. We show that
the excess grows faster than $2^r$. For every even $n=2^am$ with odd $m\ge3$ and every $r\ge4$,
\[
\s(C_n^r)\ge2^r(n-1)+(m-1)\,2^{r-3}\lfloor r/4\rfloor+1 ,
\]
so that $\s(C_n^r)-2^r(n-1)\ge c_n\,r\,2^r$. The proof introduces \emph{rigid} sequences, whose zero-sums on their
support have a very restricted form, and a product theorem for them; the sequences of Wang and Zeng are rigid. For
$n\equiv2\pmod4$ we prove $\s(C_n^5)\ge32(n-1)+8(m-1)+1$ (improving on Wang and Zeng, whose excess in rank five is
$4(m-1)$), and also obtain $\s(C_n^r)\ge2^r(n-1)+(m-1)2^{r-2}\lfloor r/5\rfloor+1$ for all $r\ge5$. This comes from a new
rank-five block, valid for all such $n$ at once; its proof is a case analysis with explicit certificates that do not
depend on $n$. Corresponding bounds hold for $\eta$. We also record a divisor lemma and a reformulation of Property D
at a critical length, and we show that the $n$-term subsequence sums of a maximal sequence with a term of multiplicity
$n-1$ cover all nonzero elements of the group.
\end{abstract}

\maketitle

\section{Introduction}

For a finite abelian group $G$ let $\s(G)$ be the smallest $\ell$ such that every sequence of $\ell$ elements of
$G$ has a zero-sum subsequence of length \emph{exactly} $\exp(G)$, and let $\eta(G)$ be the smallest $\ell$ such
that every such sequence has a \emph{short} zero-sum subsequence, of length in $[1,\exp(G)]$. The theorem of
Erd\H{o}s, Ginzburg and Ziv gives $\s(C_n)=2n-1$, and Reiher \cite{Reiher} proved $\s(C_n^2)=4n-3$. For even $n$,
Harborth's construction \cite{Harborth} (every vector of $\{0,1\}^r$ with multiplicity $n-1$) gives
\begin{equation}\label{eq:harborth}
\s(C_n^r)\ge 2^r(n-1)+1,\qquad \eta(C_n^r)\ge(2^r-1)(n-1)+1 ,
\end{equation}
and both inequalities are equalities when $n$ is a power of two. For odd $n$ the best general lower bounds in ranks
three and four are $\s(C_n^3)\ge9n-8$ and $\s(C_n^4)\ge20n-19$ \cite{Els04,EEGKR}, and in ranks five and six
$\s(C_n^5)\ge42n-41$, $\s(C_n^6)\ge96n-95$ \cite{Edel04b}.

\begin{definition}[{Gao \cite{Gao00}; see {\cite[Section~7]{GG}}}]\label{def:CD}
Let $G=C_n^r$. $G$ has \emph{Property C} if every sequence over $G$ of length $\eta(G)-1$ without short zero-sum
subsequence has the form $T^{n-1}$, and \emph{Property D} if every sequence over $G$ of length $\s(G)-1$ without
zero-sum subsequence of length $n$ has the form $T^{n-1}$.
\end{definition}

Property D was introduced by Gao \cite{Gao00}; Property C goes back to van Emde Boas \cite{vEB} in rank two. It was
conjectured that both hold for all $C_n^r$ \cite{GG,GHST}. For $r=2$ and $n\in\{2,3,5,7\}$ Property D is known
\cite{ST}; see also \cite{GG}. For these $n$ the value $\s(C_n^2)=4n-3$ is due to Kemnitz \cite{Kemnitz}. Property D
is multiplicative in rank two \cite{Gao00}, and Schlage-Puchta \cite{SP} proved it for all sufficiently large
primes. In rank three, Gao and Thangadurai \cite{GT} proved Property D for $C_{2^a}^3$ and $C_{3^a}^3$, and Gao, Hou,
Schmid and Thangadurai {\cite[Proposition~5.6, Theorem~1.9]{GHST}} for $C_{5^a}^3$.

\subsection*{The parity obstruction and the result of Wang and Zeng}
For even $n$ both properties force equality in \eqref{eq:harborth}. The reason is parity: if $T^{n-1}$ has no
zero-sum subsequence of length $n$, then $T$ injects into $C_2^r$ (Lemma~\ref{lem:div} with $d=2$), so $|T|\le2^r$.
Consequently a single sequence of length greater than $2^r(n-1)$ without zero-sum subsequence of length $n$
disproves Property D, whatever the true value of $\s(C_n^r)$ (Corollary~\ref{cor:D}). The first such sequence, of
length $81$ in $C_6^4$, was found by Wang \cite{Wang1}. Wang and Zeng \cite{WZ} then proved:

\begin{theorem}[Wang and Zeng \cite{WZ}]\label{thm:WZ}
Let $r\ge4$ and let $n\ge2$ be even. Then $C_n^r$ has Property D, and also Property C, if and only if $n$ is a power
of two. More precisely, if $n=2^am$ with $a\ge1$ and odd $m\ge3$, then
\[
\s(C_n^r)\ge2^r(n-1)+2^{r-3}(m-1)+1,\qquad \eta(C_n^r)\ge(2^r-1)(n-1)+2^{r-3}(m-1)+1 .
\]
\end{theorem}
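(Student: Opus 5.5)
We prove the two lower bounds by explicit construction; the ``if and only if'' then follows. For $n$ a power of two, Harborth's bound \eqref{eq:harborth} is an equality, and the properties are known there. For $n$ not a power of two, a sequence of length $>2^r(n-1)$ without zero-sum subsequence of length $n$ cannot be of the form $T^{n-1}$, because the parity obstruction (Lemma~\ref{lem:div} with $d=2$) forces $|T|\le 2^r$. So Property D fails (Corollary~\ref{cor:D}), and the analogous statement for $\eta$ kills Property C.

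The construction proceeds in three steps.

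\textbf{Rank-four base block.} First build a sequence $S_4$ over $C_n^4$ of length $16(n-1)+2(m-1)$ with no zero-sum subsequence of length $n$, generalising Wang's example of length $81$ in $C_6^4$ (where $m=3$: $81=80+\ldots$; actually $16\cdot5+2\cdot2=84$, so the intended base probably differs). Take the Harborth-type configuration, each vector of $\{0,1\}^4$ with multiplicity $n-1$. Alter it by removing some copies of a few cube vertices. Then add $2(m-1)+(\text{removed})$ new terms of the form $\frac n m\cdot v$ or $(n/2)$-shifted points. The design goal is that every zero-sum subsequence of length $n$ must reduce, modulo $2^a$, to a Harborth-forbidden pattern, and modulo $m$ must need more than $m-1$ copies of some special element.

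\textbf{Verification in rank four.} Take a putative zero-sum subsequence $W$, $|W|=n$, with multiplicities $x_v$. The coordinate sums give four congruences mod $n$. Split these into the parts mod $2^a$ and mod $m$ by CRT.
\begin{itemize}[label={}]
\item The mod-$2$ reduction shows that the $0/1$ part must have each $x_v$ pattern summing to a vector of even coordinates.
\item The bound $x_v\le n-1$ then forces the support to form an affine subcube configuration.
\item The mod-$m$ conditions exclude these configurations using the extra terms.
\end{itemize}
This is a finite case analysis over subsets of $\{0,1\}^4$, and it is where I expect the real difficulty: choosing the added terms so that the case analysis closes uniformly in $n$.

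\textbf{Lifting to rank $r\ge5$.} Lift by a doubling lemma: if $S$ over $C_n^{k}$ has no zero-sum $n$-subsequence, then $S\cdot0 \,\cup\, S'\cdot e_{k+1}$ works over $C_n^{k+1}$. Here $S'$ is a translate of $S$ and the new coordinate is $0$ or $1$. A zero-sum $n$-subsequence uses $j$ terms with last coordinate $1$, so $j\equiv0\pmod n$, hence $j\in\{0,n\}$. Either case gives a zero-sum $n$-subsequence inside $S$ or inside the translate, which is impossible. This doubles the length, including the excess $2^{r-3}(m-1)$.

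\textbf{The $\eta$ bound.} Delete the zero vector's $n-1$ copies (the standard $\s$–$\eta$ link), giving $\eta(C_n^r)\ge \s(C_n^r)-(n-1)$ in the form stated. Concretely, translating so that $0$ is the term of multiplicity $n-1$ and removing it leaves a sequence with no short zero-sum subsequence. One must check that $0$ indeed has multiplicity $n-1$ in the construction and that short zero-sums could be completed with zeros to length $n$.
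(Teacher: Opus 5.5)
\textbf{The rank-four block is missing.} Your outer steps are sound:
\begin{itemize}
\item Doubling (Lemma~\ref{lem:double}, with $S'=S$ or any translate) carries an excess of $2(m-1)$ in rank four to $2^{r-3}(m-1)$ in rank $r$.
\item Translating a term of multiplicity $n-1$ to $0$ and deleting its $n-1$ copies gives $\eta(C_n^r)\ge|S|-n+2$, which is Lemma~\ref{lem:eta}. This is a statement about your specific $S$. The general inequality $\eta(C_n^r)\ge\s(C_n^r)-(n-1)$ that you also write down is the reverse of the known $\s(G)\ge\eta(G)+\exp(G)-1$; it is open in general, and you do not need it.
\item Corollary~\ref{cor:D} turns the bounds into the failure of Properties~D and~C, with the power-of-two direction cited from the literature.
\end{itemize}

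The gap is the rank-four block itself, which is the whole content of the theorem. You need a sequence over $C_n^4$ of length $16(n-1)+2(m-1)$ without $n$-term zero-sum, for every $n=2^am$ at once, and you do not supply one.
\begin{itemize}
\item ``Remove some copies of a few cube vertices and add terms of the form $\frac nm v$'' fixes neither the support nor the multiplicities.
\item Your three verification bullets are not an argument. The mod-$2$ statement is vacuous since $n$ is even, and nothing is shown to force an ``affine subcube configuration''.
\item The starting point is unpromising. $(\{0,1\}^4)^{n-1}$ is a maximal sequence: every $s\in[0,n-1]^4$ is a sum of $n-1$ cube vertices (let the $j$-th vertex have $i$-th coordinate $1$ iff $j\le s_i$). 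So nothing can be added without deleting first, and you give no accounting that yields a net gain of $2(m-1)$.
\end{itemize}

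\textbf{What the block actually is.} The block of Wang and Zeng (Table~\ref{tab:WZ}) is not a perturbation of the cube. Its heavy terms are $g(P,t)=(y,x-z,q(P,t),t)$, where $P=(x,y,z)$ runs over the nine labels of Elsholtz's set. The third coordinate is fixed by the Chinese remainder theorem: $q\equiv x\pmod m$ and $q\equiv b(P,t)\pmod{2^a}$. Modulo $m$ the eighteen support points are the nine transformed labels $(y,x-z,x)$, each with two values of $t$. The two light terms arise by replacing $t$ by $t+2^a$ in rows B and D.

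The proof that this block has no $n$-term zero-sum (proof of Proposition~\ref{prop:WZrigid}) has three steps, and your plan has no substitute for any of them:
\begin{enumerate}
\item The average of the first two coordinates over an $n$-term zero-sum $W$ is a lattice point of the triangle with vertices $(0,1),(0,-1),(2,-1)$. All lattice points of this triangle lie on its boundary, so $W$ lives on one side.
\item The congruences from the third coordinate, split modulo $m$ and modulo $2^a$, together with the other coordinates, confine $W$ to a single row.
\item Within rows B and D, the fourth coordinate forces $m\mid j$ for the number $j$ of light copies. So either $W=x^n$ or $W$ uses at least $m$ copies of a light term, and neither divides the sequence.
\end{enumerate}
Without such a block, your doubling and $\eta$ steps have nothing to lift.
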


\subsection*{A note on the versions}
The first version of the present paper was written before we saw version~2 of \cite{WZ}; at that time only the
case $n=6$ \cite{Wang1} was available. Our version~1 proved the failure of Properties C and D for all
$n\equiv2\pmod4$, $n\ge6$, $r\ge4$, with excess $2^{r-2}$, by an $18$-point construction of almost constant vectors,
based on our earlier methods in \cite{EEGKR,Els04,Els16}. This result is now contained in Theorem~\ref{thm:WZ}. In the
present version we improve the bounds of Wang and Zeng in all ranks $r\ge8$, and for $n\equiv2\pmod4$ in all ranks
$r\ge5$.

\subsection*{Main results}
In Theorem~\ref{thm:WZ} the excess over Harborth's bound is a fixed multiple of $2^r$; all higher ranks are
obtained by doubling. We show that the excess grows faster than $2^r$.

\begin{theorem}\label{thm:main}
Let $n=2^am$ with $a\ge1$ and odd $m\ge3$, and let $r\ge4$. Then
\begin{align*}
\s(C_n^r)&\ge2^r(n-1)+(m-1)\,2^{r-3}\Big\lfloor\frac r4\Big\rfloor+1,\\
\eta(C_n^r)&\ge(2^r-1)(n-1)+(m-1)\,2^{r-3}\Big\lfloor\frac r4\Big\rfloor+1 .
\end{align*}
\end{theorem}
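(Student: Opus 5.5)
The plan is a product theorem: build the bound from a rank-four block, namely the Wang--Zeng sequence in $C_n^4$, and multiply it out. Write $r=4k+s$ with $k=\lfloor r/4\rfloor$ and $0\le s\le3$. A zero-sum-free sequence of length $16(n-1)+(m-1)$ in $C_n^4$ (Theorem~\ref{thm:WZ} with $r=4$) has excess $e_4=m-1$ over Harborth. The target excess is $(m-1)2^{r-3}k$. This is exactly what one gets from an additive rule
\[
E(r_1+r_2)\ \ge\ 2^{r_2}E(r_1)+2^{r_1}E(r_2)
\]
for excesses, iterated over $k$ blocks of rank $4$, followed by doubling $s$ times with the Harborth factor $C_n^1$ (excess $0$). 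Indeed $E(4k)\ge k\,2^{4(k-1)}(m-1)=(m-1)2^{4k-3}\cdot k/2$, which is off by a factor $2$. So the rule to aim for is rather $E(r_1+r_2)\ge 2^{r_2}E(r_1)+2^{r_1}E(r_2)$ with $E(4)=2(m-1)$, i.e.\ one uses the $\eta$-type count. More precisely, I would use the rigid version of the Wang--Zeng block in rank $4$ with excess $2^{4-3}(m-1)=2(m-1)$, which is what Theorem~\ref{thm:WZ} gives at $r=4$. Then $E(4k)\ge k\,2^{4k-4}\cdot2(m-1)=(m-1)2^{4k-3}k$, and doubling $s$ more times gives $(m-1)2^{r-3}k$.

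\textbf{Step 1: the construction.} Let $S\subset C_n^{r_1}$ and $T\subset C_n^{r_2}$ be sequences of the form (Harborth part $\{0,1\}^{r_i}$ with multiplicity $n-1$) plus a rigid excess. Define the product as the multiset $(\{0,1\}^{r_1}\times\{0,1\}^{r_2})^{n-1}$ together with $X\times\{0,1\}^{r_2}$ and $\{0,1\}^{r_1}\times Y$, where $X$ and $Y$ are the excess parts realized inside the Harborth cube structure. The cleanest version is to take $S\times\{b\}$ for every $b\in\{0,1\}^{r_2}$, and $\{a\}\times T$ for every $a$, with the common $\{0,1\}^{r_1+r_2}$ part of multiplicity $n-1$ counted once. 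The length is then $2^{r_1+r_2}(n-1)+2^{r_2}E_1+2^{r_1}E_2$.

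\textbf{Step 2: no zero-sum of length $n$.} This is where rigidity enters, and it is the main obstacle. Project a putative zero-sum subsequence of length $n$ onto the second coordinate block. Reducing mod $2$ (Lemma~\ref{lem:div} with $d=2$) forces the projected multiset of $\{0,1\}^{r_2}$-labels to be balanced, in the sense that each fibre contributes an even or structured amount. Rigidity says that the zero-sums of $S$ (respectively $T$) supported on its support, of any length $\equiv0\pmod n$ or of the relevant lengths, are essentially multiples of Harborth-type zero-sums, i.e.\ they can be written as $n$ copies of a single element combined with trivial relations. I would define rigidity precisely so that this restriction survives when we sum over the fibres. The argument then runs in two stages. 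First, the zero-sum restricted to each fibre $\cdot\times\{b\}$ must itself be a zero-sum of the first factor of length a multiple of $n$, or its deviation must be compensated by other fibres. The mod-$2$ and mod-$m$ constraints on the two coordinate blocks rule out such compensation. Finally, a zero-sum of length exactly $n$ living in one fibre contradicts the hypothesis on $S$ or $T$.

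\textbf{Step 3: the remaining ingredients.} I must check that the Wang--Zeng rank-$4$ sequence is rigid, as the abstract asserts, by verifying it directly from its explicit form. I must also check that rigidity is preserved under the product, so that the construction can be iterated $k$ times, and that the Harborth factor $C_n^1$ is rigid. The $\eta$ bound follows by the same construction with the zero vector removed, via the standard relation between $\s$ and $\eta$ for these sequences: there is a term $0$ of multiplicity $n-1$, and deleting it removes all short zero-sums.
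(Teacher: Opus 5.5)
Your overall plan matches the paper's: take a product of $\lfloor r/4\rfloor$ Wang--Zeng blocks, double, then apply Lemma~\ref{lem:eta}. Two details need fixing. First, $S_{WZ}(n)$ has $16$ heavy and \emph{two} light terms, so $E(4)=2(m-1)$ directly; no ``$\eta$-type'' adjustment enters. Second, its heavy part is not $\{0,1\}^4$. If you replace $\{0,1\}^{r_i}$ by the actual heavy support, your ``$S\times\{b\}$, $\{a\}\times T$'' sequence is exactly the paper's product (tuples with at most one light coordinate), and the length is unchanged.

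\textbf{The gap is in passing from two factors to $k$.} In Step~3 you plan to show that rigidity is preserved under products and then iterate; this is false. By Proposition~\ref{prop:WZrigid}, the $n$-term zero-sums on $\supp(S_{WZ}(n))$ include $h^{n-m}\ell^{m}$ with $h=g(B,2)$ heavy and $\ell=g(B,2+k)$ light. Hence $(h,h)^{n-2m}(h,\ell)^{m}(\ell,h)^{m}$ is an $n$-term zero-sum on the support of $S_{WZ}(n)\otimes S_{WZ}(n)$, since both projections are $h^{n-m}\ell^m$. It is neither $x^n$ nor of the form $h'^{a}\ell'^{n-a}$ with $h'$ heavy and $\ell'$ light. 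So the product is not rigid and cannot serve as a factor at the next stage.

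The stricter notion you describe in Step~2 (zero-sums are only $x^n$ up to ``trivial relations'') would survive products, but no support with more than $2^r$ points has it. Two points $x\ne y$ with $x\equiv y\pmod 2$ always give the zero-sum $x^{n/2}y^{n/2}$. Mixed heavy--light zero-sums are therefore unavoidable. This is why the paper proves the $t$-fold product theorem (Theorem~\ref{thm:rigidprod}) in one go, by induction on $t$ with the weaker invariant ``no $n$-term zero-sum subsequence'', and uses rigidity only for the original rank-four factors.

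\textbf{Step~2 also lacks the argument that kills these mixed zero-sums.} They are excluded by multiplicities, not by mod-$2$/mod-$m$ ``compensation'' between fibres. Lemma~\ref{lem:div} concerns sequences $T^{n-1}$, not projections of an arbitrary zero-sum. The paper argues as follows.
\begin{enumerate}
\item Project $W$ onto each block. Each projection is an $n$-term zero-sum on $\supp(S_j)$, hence $x^n$ or $h_j^{a_j}\ell_j^{n-a_j}$.
\item If some projection is $x^n$, delete that block. What remains still divides the product of the other factors: if $x$ is light, all other coordinates are heavy and $m-1\le n-1$. This contradicts the induction hypothesis.
\item Otherwise $n-a_j\ge m$ for every $j$, since otherwise the projection would divide $S_j$. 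The $n-a_j$ terms of $W$ with $j$-th coordinate $\ell_j$ have all other coordinates heavy and lying in $\{h_i,\ell_i\}$. So they all equal $(h_1,\dots,\ell_j,\dots,h_t)$, whose multiplicity is only $m-1<n-a_j$.
\end{enumerate}

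The rigidity of $S_{WZ}(n)$, which you defer, is itself a genuine proof: the paper's triangle, one-row and within-row analysis. Your doubling step and the $\eta$ bound (via the heavy term $g(A,0)=0$ and Lemma~\ref{lem:eta}) are fine.
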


For $4\le r\le7$ this is Theorem~\ref{thm:WZ}; for $r\ge8$ it is stronger. For $n\equiv2\pmod4$ we have a better
block in rank five.

\begin{theorem}\label{thm:main2}
Let $n=2m$ with odd $m\ge3$.
\begin{enumerate}[label=\textup{(\alph*)}]
\item $\s(C_n^5)\ge32(n-1)+8(m-1)+1$ and $\eta(C_n^5)\ge31(n-1)+8(m-1)+1$.
\item For all $r\ge5$,
\[
\s(C_n^r)\ge2^r(n-1)+(m-1)\,2^{r-2}\Big\lfloor\frac r5\Big\rfloor+1,\qquad
\eta(C_n^r)\ge(2^r-1)(n-1)+(m-1)\,2^{r-2}\Big\lfloor\frac r5\Big\rfloor+1 .
\]
\item Using also the rank-four block of Wang and Zeng: for $r\ge4$ let $\mu(r)=\max\{a/4+b/8:\ a,b\in\Z_{\ge0},\ 5a+4b\le r\}$.
Then $\s(C_n^r)\ge2^r(n-1)+(m-1)\,2^r\mu(r)+1$ and $\eta(C_n^r)\ge(2^r-1)(n-1)+(m-1)\,2^r\mu(r)+1$.
\end{enumerate}
\end{theorem}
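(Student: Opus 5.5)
The plan has three parts: a product theorem, a rank-five block for (a), and bookkeeping for (b) and (c).

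\textbf{The product theorem.} Following the abstract, I would use the product theorem for rigid sequences. I expect it to take the following form. Suppose $S_1$ over $C_n^{r_1}$ and $S_2$ over $C_n^{r_2}$ are rigid, with lengths $2^{r_i}(n-1)+e_i$, and neither has a zero-sum subsequence of length $n$. Then there is a rigid sequence over $C_n^{r_1+r_2}$ of length $2^{r_1+r_2}(n-1)+2^{r_2}e_1+2^{r_1}e_2$ with the same property. The natural construction takes, for each $x\in\supp S_1$ and $y\in\supp S_2$, the element $(x,y)$. Its multiplicity is set to $n-1$ plus the appropriate "excess" contributions coming from the Harborth parts of the two factors. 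Rigidity is exactly what lets the zero-sum analysis on each coordinate block be done separately. An analogous statement for $\eta$ loses one copy of $n-1$, coming from the $0$ element.

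Part (b) follows from (a) by induction. Write $r=5k+s$ with $0\le s\le4$. Take the product of $k$ copies of the rank-five block from (a), then pad with the Harborth sequence in rank $s$; the Harborth sequence is rigid with excess $0$. In rank $5k$ the excess $e_k$ satisfies $e_k=32e_{k-1}+2^{5(k-1)}\cdot8(m-1)$. This gives $e_k=k\,2^{5k-2}(m-1)$, and padding multiplies it by $2^s$, giving $(m-1)2^{r-2}\lfloor r/5\rfloor$. Part (c) is the same computation. Use $a$ rank-five blocks, each contributing excess proportion $1/4$, and $b$ Wang--Zeng rank-four blocks, each contributing proportion $1/8$ (the abstract says these are rigid). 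Pad to rank $r$, and choose $a,b$ to attain $\mu(r)$. The $\eta$ statements follow by the same product, or by the standard relation for sequences containing $0^{n-1}$.

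\textbf{The main obstacle: the rank-five block in (a).} I need a rigid sequence over $C_{2m}^5$ of length $32(n-1)+8(m-1)$ with no zero-sum subsequence of length $n$. As the starting point I would take the rank-four Wang--Zeng block doubled, which has excess $4(m-1)$. I would then modify it with almost-constant vectors, in the spirit of the $18$-point construction: a support of $0/1$ vectors together with a few vectors having entries in $\{m,m\pm1\}$-type values. The point is to double the excess to $8(m-1)$.

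The verification splits into cases according to the reduction mod $2$. Any zero-sum of length $n$ projects to an even-multiplicity pattern over $C_2^5$, and the divisor lemma (Lemma~\ref{lem:div}) applied with $d=2$ and with $d=m$ restricts which supports can occur. In each case one must rule out integer solutions of a linear system
\[
\sum_x c_x x\equiv0 \pmod n,\qquad \sum c_x=n,\qquad 0\le c_x\le v_x ,
\]
where $v_x$ is the multiplicity of $x$. My plan is to give, for each case, an explicit certificate: a linear functional (a dual vector) that reduces mod $n$ to a contradiction. Because the coordinates are affine in $m$, these certificates can be chosen independently of $n$.

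This case analysis is the hard part, and so is showing that the block is rigid, i.e.\ that all zero-sums on its support have the restricted form required for the product theorem to apply. I would find the certificates by computer for $n=6,10,14$ and then check that they are uniform in $m$.
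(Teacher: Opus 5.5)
Your proposal has two genuine gaps.

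\textbf{The product step.} Your binary product theorem asserts that the product of two rigid sequences is rigid, and this is false. Suppose $h^a\ell^{n-a}$ is a zero-sum on $\supp(S_1)$ and $h'^b\ell'^{n-b}$ is a zero-sum on $\supp(S_2)$, with $a+b\ge n$. Then $(h,h')^{a+b-n}(h,\ell')^{n-b}(\ell,h')^{n-a}$ is an $n$-term zero-sum on the support of the product, and it uses two distinct light terms. For $S_5(n)$, let $h$ and $\ell$ be the heavy and light terms over $(0,b)$ and $(1,b)$. They have the same label, so $(h,\ell)^m(\ell,h)^m$ is such a zero-sum. Consequently your recursion $e_k=32e_{k-1}+2^{5(k-1)}\cdot8(m-1)$ cannot be run as an induction on rigid sequences: from $k=3$ on, and in (c) whenever $a+b\ge3$, you apply the theorem to a non-rigid factor.

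There are two ways to repair this.
\begin{enumerate}
\item Prove the no-zero-sum statement for the $t$-fold product in one go, as in Theorem~\ref{thm:rigidprod}. If some block projection is $x^n$, delete that block and use induction. Otherwise the $n-a_j\ge m$ terms with $j$-th coordinate $\ell_j$ are all equal to $(h_1,\dots,\ell_j,\dots,h_t)$, which has multiplicity $m-1$, a contradiction.
\item As the paper does for $n=2m$, work with glued sequences. For these, ``glued and without $2m$-term zero-sum'' is preserved under products (Theorem~\ref{thm:gluedprod}), so iteration is legitimate.
\end{enumerate}
Your description of the product multiplicities is also wrong. No multiplicity can exceed $n-1$, since $x^n$ is itself a zero-sum. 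The product gives heavy$\times$heavy multiplicity $n-1$, gives heavy$\times$light multiplicity $m-1$, and omits light$\times$light. With that correction your length bookkeeping is right.

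\textbf{The rank-five block.} More seriously, you never construct the block, and it is the whole content of (a) and the input to (b) and (c). ``Modify the doubled Wang--Zeng block by almost-constant vectors'' fixes neither a support nor multiplicities, so there is nothing to verify. The paper's block has a different shape (Table~\ref{tab:40}):
\begin{enumerate}
\item Its support modulo $m$ is $\{0,1\}\times V_4$, where $V_4$ is the $20$-point cone in $\{-1,0,1\}^4$. This set is a $20$-cap modulo $3$ and is $m$-good for every odd $m$.
\item The $32$ heavy points have pairwise distinct labels in $\F_2^5$.
\item The $8$ points $(1,\pm1,\pm1,\pm1,0)$ are light, and each shares its label with the corresponding heavy point $(0,\pm1,\pm1,\pm1,0)$.
\end{enumerate}

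Your verification plan also lacks the idea that makes certificates independent of $n$. For the system $\sum c_xx\equiv0$, $\sum c_x=n$, $0\le c_x\le n-1$, you give no mechanism making dual-vector certificates uniform in $n$. The divisor lemma does not help here, because it concerns sequences $T^{n-1}$ with all multiplicities equal. The missing step is Lemma~\ref{lem:red}. Using $m$-goodness of the support and distinct heavy labels, it shows that a $2m$-term zero-sum on the support either has all $d_i\le m-1$, or is $x^{2m}$, or is $x^my^m$ with $\pi(x)=\pi(y)$.

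In the first case $\sum d_ia_i=my$, and $y/2$ is a convex combination of the $a_i$ with weights $<\tfrac12$. So $y$ lies in a fixed box, independent of $m$. Global congruences modulo $4$ leave $210$ candidates for $y$. Each is excluded by faces, separation, small support or a congruence (Lemma~\ref{lem:tools}).

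This same reduction makes rigidity of the block automatic, so it is not a separate hard step. The light labels are pairwise distinct, so the only non-constant zero-sums on the support are of the form $h^m\ell^m$.
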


Parts (a) and (b) use only the rank-five block of Section~\ref{sec:rank5} and the product theorem for glued sequences
(Theorem~\ref{thm:gluedprod}); they do not depend on \cite{WZ}. Since $2\lfloor r/5\rfloor\ge\lfloor r/4\rfloor$, part (b)
is at least as strong as Theorem~\ref{thm:main} for $n\equiv2\pmod4$, and strictly stronger for $r\in\{5,6,7\}$ and
$r\ge10$. Part (c) improves (b) slightly for some $r$, for instance $\mu(9)=3/8$ against $\lfloor9/5\rfloor/4=1/4$.

\begin{table}[h]
\centering\small
\begin{tabular}{@{}ccc@{}}
\toprule
$r$ & excess, Theorem~\ref{thm:WZ} & excess, Theorems~\ref{thm:main} and \ref{thm:main2}\\
\midrule
$4$ & $2(m-1)$ & $2(m-1)$\\
$5$ & $4(m-1)$ & $8(m-1)$ for $n\equiv2\ (4)$\\
$6$ & $8(m-1)$ & $16(m-1)$ for $n\equiv2\ (4)$\\
$7$ & $16(m-1)$ & $32(m-1)$ for $n\equiv2\ (4)$\\
$8$ & $32(m-1)$ & $64(m-1)$\\
$9$ & $64(m-1)$ & $128(m-1)$; $192(m-1)$ for $n\equiv2\ (4)$\\
$10$ & $128(m-1)$ & $256(m-1)$; $512(m-1)$ for $n\equiv2\ (4)$\\
$14$ & $2048(m-1)$ & $6144(m-1)$; $10240(m-1)$ for $n\equiv2\ (4)$\\
$15$ & $4096(m-1)$ & $12288(m-1)$; $24576(m-1)$ for $n\equiv2\ (4)$\\
$19$ & $65536(m-1)$ & $262144(m-1)$; $458752(m-1)$ for $n\equiv2\ (4)$\\
$20$ & $131072(m-1)$ & $655360(m-1)$; $1048576(m-1)$ for $n\equiv2\ (4)$\\
\bottomrule
\end{tabular}
\caption{The excess $\s(C_n^r)-1-2^r(n-1)$ guaranteed by the constructions, $n=2^am$, $m$ odd.}
\end{table}

\begin{corollary}\label{cor:growth}
Let $n\ge2$ be even, and let $m$ be the odd part of $n$.
\begin{enumerate}[label=\textup{(\alph*)}]
\item \textup{(Harborth \cite{Harborth})} If $n$ is a power of two, then $\s(C_n^r)=2^r(n-1)+1$ for all $r$.
\item Otherwise, for all $r\ge4$,
\[
\s(C_n^r)-2^r(n-1)-1\ \ge\ (m-1)\,2^{r-3}\Big\lfloor\frac r4\Big\rfloor\ \ge\ \frac{m-1}{64}\,r\,2^r .
\]
\item If $n\equiv2\pmod4$, then for all $r\ge5$,
\[
\s(C_n^r)-2^r(n-1)-1\ \ge\ (m-1)\,2^{r-2}\Big\lfloor\frac r5\Big\rfloor\ \ge\ \frac{m-1}{40}\,r\,2^r .
\]
\end{enumerate}
In particular $\big(\s(C_n^r)-1\big)/2^r-(n-1)\to\infty$ unless $n$ is a power of two. The same bounds hold for
$\eta(C_n^r)-(2^r-1)(n-1)-1$.
\end{corollary}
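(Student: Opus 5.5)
The corollary is a consequence of Theorems~\ref{thm:main} and \ref{thm:main2} together with Harborth's bound \eqref{eq:harborth}, plus two elementary floor-function estimates. The plan is to treat the three parts separately and then read off the limit statement.

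For part (a), take $n=2^a$. The lower bound $\s(C_n^r)\ge2^r(n-1)+1$ is \eqref{eq:harborth}. For the upper bound I would use the classical induction on the exponent. For $n=2$ the group $C_2^r$ satisfies $\s(C_2^r)=2^r+1$: among $2^r+1$ elements two coincide by pigeonhole, and they form a zero-sum pair. For the inductive step I use the standard inequality $\s(C_{kl}^r)\le(\s(C_k^r)-1)l+\s(C_l^r)$, proved by repeatedly extracting zero-sums of length $l$ and then applying the $C_k^r$ result to the sums of these blocks. With $k=2^{a-1}$ and $l=2$ this gives
\[
\s(C_{2^a}^r)\le 2^r(2^{a-1}-1)\cdot2+2^r+1=2^r(2^a-1)+1 .
\]
This is the result attributed to Harborth, so it can simply be cited.

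For part (b), $n$ is not a power of two, so $m\ge3$. Theorem~\ref{thm:main} gives the first inequality directly. For the second, I need $2^{r-3}\lfloor r/4\rfloor\ge r2^r/64$, that is, $8\lfloor r/4\rfloor\ge r$ for $r\ge4$. Write $r=4q+s$ with $q\ge1$ and $0\le s\le3$. Then $r\le4q+3\le7q<8q$, which settles it.

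For part (c), Theorem~\ref{thm:main2}(b) gives the first inequality. For the second, I need $2^{r-2}\lfloor r/5\rfloor\ge r2^r/40$, that is, $10\lfloor r/5\rfloor\ge r$ for $r\ge5$. With $r=5q+s$, $q\ge1$, $0\le s\le 4$, we get $r\le5q+4\le9q<10q$.

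For the limit statement, dividing the bound in (b) by $2^r$ gives at least $(m-1)r/64\to\infty$. The $\eta$ versions follow identically from the $\eta$ halves of Theorems~\ref{thm:main} and \ref{thm:main2}.

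The only non-routine step is the upper bound in (a). Since it is classical, I would cite \cite{Harborth} for it rather than reprove it; everything else is a direct combination of the results above with the two floor estimates.
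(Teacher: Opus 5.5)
Your proposal is correct and matches the paper's proof: (a) is cited from Harborth (your sketch via $\s(C_{kl}^r)\le(\s(C_k^r)-1)l+\s(C_l^r)$ is a valid optional addition), and (b) and (c) follow from Theorems~\ref{thm:main} and \ref{thm:main2}(b) together with $\lfloor r/4\rfloor\ge r/8$ and $\lfloor r/5\rfloor\ge r/10$, which you verify correctly. The $\eta$ statements come in the same way from the $\eta$ halves of those theorems.
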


\begin{proof}
(a) is Harborth's theorem \cite{Harborth}. (b) follows from Theorem~\ref{thm:main} and $\lfloor r/4\rfloor\ge r/8$ for
$r\ge4$, and (c) from Theorem~\ref{thm:main2}(b) and $\lfloor r/5\rfloor\ge r/10$ for $r\ge5$.
\end{proof}

The proof of Theorem~\ref{thm:main} rests on a simple notion. A zero-sum free sequence with terms of multiplicity
$n-1$ (heavy) and at most $m-1$ (light) is \emph{rigid} if every $n$-term zero-sum supported on its support, with
arbitrary multiplicities, is of the form $x^n$ or $h^a\ell^{n-a}$ with $h$ heavy and $\ell$ light
(Definition~\ref{def:rigid}). Products of rigid sequences are zero-sum free (Theorem~\ref{thm:rigidprod}), and the
sequences of Wang and Zeng are rigid (Proposition~\ref{prop:WZrigid}). Light densities add under products, so the
excess per $2^r$ grows linearly in the number of factors. For $n\equiv2\pmod4$ the rank-five block of
Theorem~\ref{thm:main2}(a) is built from two copies of the $20$-point cone in $\{-1,0,1\}^4$; its proof is a case
analysis whose certificates do not depend on $n$ (Section~\ref{sec:rank5}).

We also record the general tools: a divisor lemma (Lemma~\ref{lem:div}), which gives
$M_m(3)=9$ and $M_m(4)=20$ for odd $m$ with $3\mid m$, and a reformulation of Property~D at a critical length
(Proposition~\ref{prop:equiv}).

\subsection*{Organisation}
Section~\ref{sec:prelim} collects notation and basic facts, and Section~\ref{sec:div} the divisor lemma and the
critical length. Section~\ref{sec:rigid} contains the product theorem for rigid sequences, and
Section~\ref{sec:WZ} shows that the sequences of Wang and Zeng are rigid and proves Theorem~\ref{thm:main}.
Section~\ref{sec:glued} treats glued sequences for $n=2m$, and Section~\ref{sec:rank5} the rank-five block and
Theorem~\ref{thm:main2}. Section~\ref{sec:last} shows that the $n$-term sums of a maximal sequence with a term of multiplicity $n-1$ cover all nonzero elements, and Section~\ref{sec:deps} explains the role of computation. The appendix lists the certificates for the rank-five block.

\section{Notation and preliminaries}\label{sec:prelim}

We write sequences multiplicatively, $S=\prod_{g}g^{v_g(S)}$, and $T^k$ for the sequence in which every term of $T$ is
repeated $k$ times. The support $\supp(S)$ is the set of distinct terms. Elements of $C_n^r=(\Z/n\Z)^r$ are identified
with integer vectors, and $\pi_d$ is reduction modulo $d\mid n$. A set $T\subseteq C_m^r$ is \emph{$m$-good} if
$T^{m-1}$ has no zero-sum subsequence of length $m$. For even $n=2^am$ with $m$ odd, a term of a sequence over
$C_n^r$ is \emph{heavy} if its multiplicity is $n-1$, and \emph{light} if its multiplicity is at most $m-1$. Let $M_m(r)$ (resp.\ $N_m(r)$) be the maximal size of a set
$T\subseteq C_m^r$ such that $T^{m-1}$ has no zero-sum subsequence of length $m$ (resp.\ no short zero-sum
subsequence). A cap in $AG(r,3)=C_3^r$ is a set without three collinear points; $a_3(r)$ is the maximal size of a
cap, with $a_3(r)=2,4,9,20,45,112$ for $r\le6$ \cite{EFLS,Potechin}. For larger $r$ see Edel \cite{Edel04}.

\begin{lemma}\label{lem:basic}
Let $G=C_m^r$. (1) If $T^{m-1}$ has no zero-sum subsequence of length $m$, then $T$ has no repeated terms.
(2) $\s(G)\ge(m-1)M_m(r)+1$ and $\eta(G)\ge(m-1)N_m(r)+1$. (3) If $G$ has Property D, then $\s(G)-1=(m-1)M_m(r)$; if
$G$ has Property C, then $\eta(G)-1=(m-1)N_m(r)$. (4) Translations preserve the absence of zero-sum subsequences of
length $m$, and $N_m(r)\ge M_m(r)-1$.
\end{lemma}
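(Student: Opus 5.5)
We prove the four parts of Lemma~\ref{lem:basic} in order; each is a short direct argument.

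For (1), suppose $T$ contains some $g$ twice. Then $T^{m-1}$ contains $g$ with multiplicity at least $2(m-1)\ge m$ (as $m\ge2$). Hence $g^m$ is a subsequence, and its sum is $mg=0$ in $C_m^r$. This is a zero-sum subsequence of length $m$, a contradiction.

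For (2), take a set $T$ of size $M_m(r)$ attaining the maximum. Then $T^{m-1}$ has length $(m-1)M_m(r)$ and no zero-sum subsequence of length $m$. So $\s(G)>(m-1)M_m(r)$. The same argument with $N_m(r)$ and short zero-sums gives the bound for $\eta$.

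For (3), assume Property D. Every extremal sequence of length $\s(G)-1$ has the form $T^{m-1}$. By (1), $T$ is a set, and it satisfies the defining condition of $M_m(r)$, so $\s(G)-1=(m-1)|T|\le(m-1)M_m(r)$. Combined with (2) this gives equality. Property C and $\eta$ are handled identically, using the short-zero-sum version of (1); that version holds because $g^m$ is short, its length $m=\exp(G)$ lying in $[1,m]$.

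For (4), first consider translation. If $S=g_1\cdots g_\ell$ and $S'=(g_1+x)\cdots(g_\ell+x)$, then any $m$-term subsequence of $S'$ has sum $\sigma+mx=\sigma$. Here $\sigma$ is the sum of the corresponding $m$ terms of $S$. So $S$ has no zero-sum subsequence of length $m$ if and only if $S'$ has none.

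For $N_m(r)\ge M_m(r)-1$, take an $m$-good set $T$ with $|T|=M_m(r)$ and fix $t_0\in T$. Translate by $-t_0$ to get $T'=T-t_0$; it is still $m$-good and contains $0$. Let $T''=T'\setminus\{0\}$, a set of size $M_m(r)-1$. We claim $T''^{\,m-1}$ has no short zero-sum subsequence. Suppose $U$ is a zero-sum subsequence of $T''^{\,m-1}$ of length $k\in[1,m]$. Append $0^{m-k}$, which is available since $0$ has multiplicity $m-1\ge m-k$ in $T'^{\,m-1}$. This produces a zero-sum subsequence of $T'^{\,m-1}$ of length exactly $m$, contradicting that $T'$ is $m$-good. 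Therefore $N_m(r)\ge M_m(r)-1$.

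None of these steps is a real obstacle. The only care needed is in (4), to check that enough copies of $0$ are available ($m-k\le m-1$ because $k\ge1$). In (3) one must also note that "the form $T^{m-1}$" forces $T$ to be a set, which is exactly (1).
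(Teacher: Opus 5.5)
Your proof is correct and follows the paper's argument essentially step for step. The steps match: the count $2(m-1)\ge m$ for (1), an extremal set for (2), Property D/C together with (1) for (3), and for (4) the shift of $m$-term sums by $mx=0$ plus completing a short zero-sum of $(T\setminus\{0\})^{m-1}$ by $0^{m-k}$; your explicit translation of an optimal $m$-good set so that it contains $0$ just spells out a step the paper leaves implicit.
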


\begin{proof}
(1) A repeated term would occur $2(m-1)\ge m$ times. (2) is immediate. (3) By Property~D an extremal sequence is
$T^{m-1}$ with a set $T$, so $\s(G)-1=(m-1)|T|\le(m-1)M_m(r)$, and (2) gives equality; similarly for $\eta$. (4) A
translate changes the sum of $m$ terms by $mg=0$. If $0\in T$ and $T$ is $m$-good, a short zero-sum of
$(T\setminus\{0\})^{m-1}$ of length $k$ is completed by $0^{m-k}$ to one of length $m$.
\end{proof}

\begin{lemma}[{\cite[Lemma~3.2]{Wang1}, \cite[Lemma~3.4]{WZ}}]\label{lem:eta}
If $S$ over $G$, $\exp(G)=n$, has no $n$-term zero-sum subsequence and contains a term of multiplicity $n-1$, then
$\eta(G)\ge|S|-n+2$.
\end{lemma}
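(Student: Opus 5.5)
Let $a\in S$ be a term of multiplicity $n-1$. The plan is to delete it completely and translate the remainder by $-a$. Put $S'=Sa^{-(n-1)}$, so $|S'|=|S|-n+1$, and let $U=S'-a$ be the sequence of terms $s-a$, $s\in S'$. We will show that $U$ has no short zero-sum subsequence. Then $\eta(G)\ge|U|+1=|S|-n+2$ follows at once from the definition of $\eta$.

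Suppose, for contradiction, that $U$ has a zero-sum subsequence $W$ of length $k$ with $1\le k\le n$. Write $W$ as the terms $s_1-a,\dots,s_k-a$ with $s_1\cdots s_k\mid S'$. Then
\[
\sum_{i=1}^k s_i = k a .
\]

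\emph{Case $k\le n-1$.} The $n$-term subsequence $s_1\cdots s_k\,a^{n-k}$ of $S$ is available, because $S$ contains $a$ exactly $n-1\ge n-k$ times. Its sum is $ka+(n-k)a=na=0$, which contradicts the hypothesis on $S$.

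\emph{Case $k=n$.} Then $s_1\cdots s_n$ is already an $n$-term subsequence of $S'$, hence of $S$. Its sum is $na=0$, again a contradiction.

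There is no substantive obstacle here. The only points needing care are the following.
\begin{enumerate}[label=\textup{(\roman*)}]
\item Short zero-sums have length at least $1$ by definition. This guarantees $n-k\le n-1$ in the first case, so enough copies of $a$ exist.
\item The copies of $a$ are not used in $W$, since they were all removed from $S'$. The subsequence $s_1\cdots s_k\,a^{n-k}$ therefore genuinely divides $S$.
\end{enumerate}
This is the same translation idea as in Lemma~\ref{lem:basic}(4).
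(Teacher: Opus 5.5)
Your proof is correct, and it is the standard argument behind this lemma. The paper gives no proof of its own and only cites \cite{Wang1,WZ}: translate so the heavy term becomes $0$ and delete its $n-1$ copies; a remaining short zero-sum of length $k\le n-1$ is then padded by $0^{n-k}$, and one of length $n$ is already forbidden. This is the same padding idea the paper uses in Lemma~\ref{lem:basic}(4).
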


\begin{lemma}[doubling, see \cite{Els04,WZ}]\label{lem:double}
If $S$ over $C_n^r$ has no $n$-term zero-sum subsequence, then $S\times\{0,1\}$ (every term $g$ replaced by $(g,0)$
and $(g,1)$) is a sequence over $C_n^{r+1}$ of length $2|S|$ without $n$-term zero-sum subsequence, and with the
same multiplicities.
\end{lemma}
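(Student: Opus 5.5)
The plan is to argue by contradiction, projecting a putative zero-sum of length $n$ in $S\times\{0,1\}$ onto the last coordinate and onto the first $r$ coordinates. First the length and multiplicity claims. Each term $g$ of $S$, with multiplicity $v_g(S)$, produces the two distinct elements $(g,0)$ and $(g,1)$ of $C_n^{r+1}$, each with multiplicity $v_g(S)$. Distinct $g$ give distinct pairs, so $|S\times\{0,1\}|=2|S|$ and the multiplicities are preserved.

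Now suppose $U$ is a subsequence of $S\times\{0,1\}$ of length $n$ with sum $0$ in $C_n^{r+1}$. Write $U=U_0U_1$, where $U_0$ collects the terms with last coordinate $0$ and $U_1$ those with last coordinate $1$. The last coordinate of $\sigma(U)$ is $|U_1|\bmod n$, so $|U_1|\equiv0\pmod n$. Since $0\le|U_1|\le|U|=n$, we get $|U_1|\in\{0,n\}$, so $U$ lies entirely in one layer $S\times\{0\}$ or $S\times\{1\}$.

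In either case, dropping the last coordinate gives a subsequence $U'$ of $S$ of length $n$ whose sum is the first $r$ coordinates of $\sigma(U)$, namely $0$. It is a genuine subsequence of $S$ because within one layer the multiplicity of $(g,\epsilon)$ is $v_g(S)$. This contradicts the hypothesis on $S$, which proves the lemma.

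The only step needing care is the observation that the number of last-coordinate-$1$ terms lies in $[0,n]$ and is divisible by $n$. This is exactly why the construction uses $\{0,1\}$ rather than a larger set of last coordinates. It is not a real obstacle; the whole argument is elementary.
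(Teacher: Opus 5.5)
Your proposal is correct and follows the paper's own argument: the paper's one-line proof is exactly your observation that $n$ last coordinates from $\{0,1\}$ sum to $0$ modulo $n$ only if they are all equal. That forces the zero-sum into a single layer, which is a copy of $S$. Your treatment of multiplicities, so that the projected sequence is a genuine subsequence of $S$, correctly fills in the detail the paper leaves implicit.
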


\begin{proof}
$n$ entries from $\{0,1\}$ sum to $0$ modulo $n$ only if they are all equal.
\end{proof}

\section{The divisor lemma and the critical length}\label{sec:div}

\begin{lemma}\label{lem:div}
Let $d\mid m$ with $d\ge2$, and let $T\subseteq C_m^r$ be a set. If $T^{m-1}$ has no zero-sum subsequence of length
$m$ (resp.\ no short zero-sum subsequence), then $\pi_d$ is injective on $T$ and $\pi_d(T)^{d-1}$ has no zero-sum
subsequence of length $d$ (resp.\ no short zero-sum subsequence). In particular $M_m(r)\le M_d(r)$ and
$N_m(r)\le N_d(r)$.
\end{lemma}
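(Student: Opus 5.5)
The plan is to prove injectivity and the zero-sum property together, by lifting a zero-sum in $C_d^r$ back to $C_m^r$. Write $m=dk$.

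\emph{Injectivity.} Suppose $x\ne y$ in $T$ with $\pi_d(x)=\pi_d(y)$. Then $y-x=dz$ for some $z\in C_m^r$. I would consider the sequence $x^{m-k}y^{k}$, of length $m$. Its sum is $mx+k\,dz=0$, since $kd=m$. Both multiplicities $m-k$ and $k$ are at most $m-1$ (because $1\le k\le m/2$), so this is a zero-sum subsequence of $T^{m-1}$ of length $m$, a contradiction. In the short zero-sum version I would use instead $x^{k}$ together with a suitable power of $y$. The cleanest choice seems to be $y^{k}x^{?}$, but a simpler route is available. The sequence $x^{m-k}y^{k}$ of length $m\le\exp$ is itself short, so the same sequence works in both cases. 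Hence $\pi_d$ is injective on $T$.

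\emph{Zero-sums mod $d$.} Suppose $\pi_d(T)^{d-1}$ has a zero-sum subsequence of length $d$ (respectively of length $\ell\in[1,d]$). Lift it to $U=\prod_{t\in T}t^{a_t}$ with $a_t\le d-1$ and $\sum a_t=\ell$. Then $\sigma(U)\in dC_m^r$, which is the subgroup $\cong C_k^r$. I would take $k$ disjoint-in-multiplicity copies, i.e.\ the sequence $U^k=\prod t^{ka_t}$. Its multiplicities satisfy $ka_t\le k(d-1)=m-k\le m-1$, so $U^k$ is a subsequence of $T^{m-1}$. Its length is $k\ell$, which equals $m$ in the first case and lies in $[1,m]$ in the second. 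Its sum is $k\sigma(U)\in k\,dC_m^r=0$. This contradicts the hypothesis.

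The bounds $M_m(r)\le M_d(r)$ and $N_m(r)\le N_d(r)$ then follow by applying the lemma to an extremal set $T$: the image $\pi_d(T)$ has the same size by injectivity and is $d$-good.

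The only delicate point is checking that the multiplicities stay at most $m-1$. This works precisely because $ka_t\le k(d-1)<m$ and $k\le m-k$. I expect no real obstacle beyond this.
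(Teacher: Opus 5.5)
Your proof is correct and is essentially the paper's argument: the paper also multiplies a relation modulo $d$ by $k=m/d$, which keeps all multiplicities at most $k(d-1)=m-k\le m-1$, and your injectivity witness $x^{m-k}y^{k}$ is exactly its choice $b_x=d-1$, $b_y=1$. The aside about $y^{k}x^{?}$ in the short case should simply be deleted, since, as you then note, a zero-sum of length $m$ is already short.
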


\begin{proof}
Multiplying a relation modulo $d$ by $m/d$ gives a relation modulo $m$, while all multiplicities stay at most
$m-1$: for $0\le b_t\le d-1$ let $W=\prod_tt^{(m/d)b_t}$, a subsequence of $T^{m-1}$ of length $(m/d)\sum b_t$; if
$\sum b_tt\equiv0\pmod d$, then $\sum(m/d)b_tt\equiv0\pmod m$. If $\pi_d(t)=\pi_d(t')$ with $t\ne t'$, take
$b_t=d-1$, $b_{t'}=1$; then $W$ is a zero-sum of length $m$. A zero-sum subsequence of $\pi_d(T)^{d-1}$ of length $d$
(resp.\ in $[1,d]$) gives one of $T^{m-1}$ of length $m$ (resp.\ in $[1,m]$).
\end{proof}

For $d=2$ this is {\cite[Proposition~3.1]{Wang1}}; lifting arguments of this kind are used in \cite{EEGKR}.

\begin{corollary}\label{cor:values}
(1) If $m$ is even, then $M_m(r)=2^r$ and $N_m(r)=2^r-1$. (2) If $3\mid m$, then $M_m(r)\le a_3(r)$ and
$N_m(r)\le a_3(r)-1$. (3) If $m$ is odd and $3\mid m$, then $M_m(3)=9$, $M_m(4)=20$, $N_m(3)=8$, $N_m(4)=19$.
\end{corollary}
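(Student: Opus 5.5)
For (1), apply Lemma~\ref{lem:div} with $d=2$. If $T^{m-1}$ has no zero-sum subsequence of length $m$, then $\pi_2$ is injective on $T$, so $M_m(r)\le 2^r$. For the lower bound, use Harborth's set $T=\{0,1\}^r\subseteq C_m^r$. Take $m$ terms of $T^{m-1}$ with zero sum. In each coordinate the entries are $0$ or $1$, so the coordinate sum lies in $[0,m]$ and is divisible by $m$. Hence each coordinate is constant on the chosen terms, and all $m$ terms equal one vector $t$. This is impossible, since $t$ occurs only $m-1$ times. Thus $M_m(r)=2^r$.

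For $N_m$ in (1), the upper bound $N_m(r)\le N_2(r)$ comes from Lemma~\ref{lem:div}. We have $N_2(r)=2^r-1$: a set in $C_2^r$ with no short zero-sum (length $1$ or $2$) avoids $0$ and has distinct elements, so it lies in $C_2^r\setminus\{0\}$. For the lower bound take $T=\{0,1\}^r\setminus\{0\}$. A short zero-sum of $T^{m-1}$ of length $k\le m$, padded with $0^{m-k}$, would give a length-$m$ zero-sum in $(\{0,1\}^r)^{m-1}$, contradicting the previous paragraph. So $N_m(r)\ge 2^r-1$; this is Lemma~\ref{lem:basic}(4).

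For (2), apply Lemma~\ref{lem:div} with $d=3$. Then $\pi_3(T)$ has $|T|$ distinct elements and $\pi_3(T)^2$ has no zero-sum subsequence of length $3$. Three distinct points summing to $0$ in $C_3^r$ are exactly a line, and repeated points cannot occur. So $\pi_3(T)$ is a cap and $M_m(r)\le a_3(r)$. For $N_m$, $\pi_3(T)$ additionally has no short zero-sum, so $0\notin\pi_3(T)$. It also contains no pair $\{x,-x\}$, although that is implied by the cap condition together with $0$, up to translation. The translate $\pi_3(T)\cup\{0\}$ is still a cap: a line through $0$ would be $\{0,x,-x\}$, and $x+(-x)=0$ is a short zero-sum of length $2$ using $x$ and $-x$. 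Hence $|T|+1\le a_3(r)$.

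For (3), the upper bounds come from (2) with $a_3(3)=9$ and $a_3(4)=20$. For the matching lower bounds I would lift explicit caps $C\subseteq\{-1,0,1\}^r$ of sizes $9$ and $20$ (the $20$-point cone mentioned in the introduction) to $C_m^r$ for odd $m$, and show $C^{m-1}$ has no $m$-term zero-sum. The approach is the known one of \cite{Els04,EEGKR}: an $m$-term zero-sum with entries in $\{-1,0,1\}$ has integer coordinate sums in $[-m,m]$, hence in $\{-m,0,m\}$. One then excludes all such multiplicity vectors, typically by a linear-functional or convexity argument exploiting the cap structure. The $N$-values follow from Lemma~\ref{lem:basic}(4) by translating so that $0\in T$, giving $N\ge M-1$. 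This lower-bound step is the main obstacle, and I would import it directly from the cited constructions establishing $\s(C_n^3)\ge 9n-8$ and $\s(C_n^4)\ge 20n-19$, which are exactly of the form $T^{m-1}$.
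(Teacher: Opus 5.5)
Your proposal is correct and follows the paper's proof. Part (1) uses Lemma~\ref{lem:div} with $d=2$ together with Harborth's set $\{0,1\}^r$ and Lemma~\ref{lem:basic}(4). Part (2) uses Lemma~\ref{lem:div} with $d=3$: the image is a cap, and for $N$ adjoining $0$ keeps it a cap, a step the paper leaves implicit in $N_3(r)=a_3(r)-1$. Part (3) combines (2) with the cited $9$- and $20$-point sets from \cite{Els04,EEGKR,Els16}, which are $m$-good for every odd $m$, and with Lemma~\ref{lem:basic}(4). As in the paper, the lower bounds in (3) are imported from those sources; for the $20$-point cone $V_4$ the paper also proves $m$-goodness itself in Lemma~\ref{lem:V4}, by the convexity-and-parity argument you sketch.
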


\begin{proof}
$\{0,1\}^r$ is $m$-good for every $m$, since $m$ binary vectors sum to $0$ modulo $m$ only if they are all equal;
every subset of $C_2^r$ is $2$-good, and $T\subseteq C_2^r$ has no short zero-sum iff $0\notin T$. For $d=3$,
$t+t+t'\ne0$ for $t\ne t'$, and $t+t'+t''=0$ for distinct terms iff they are collinear, so $M_3(r)=a_3(r)$ and
$N_3(r)=a_3(r)-1$. Lemma~\ref{lem:div} gives (1) and (2). For (3) use $a_3(3)=9$, $a_3(4)=20$, the sets of size $9$
and $20$ that are $m$-good for every odd $m$ \cite{Els04,EEGKR,Els16}, and Lemma~\ref{lem:basic}(4).
\end{proof}

\begin{proposition}\label{prop:equiv}
Let $G=C_m^r$ and $L=(m-1)M_m(r)$. Then $G$ has Property~D if and only if $\s(G)=L+1$ and every sequence of length
$L$ without zero-sum subsequence of length $m$ has the form $T^{m-1}$. In particular, if there is a sequence of
length at least $L$ without zero-sum subsequence of length $m$ which is not of the form $T^{m-1}$, then $G$ does not
have Property~D, whatever the value of $\s(G)$. The analogous statements hold for Property~C, with $N_m(r)$.
\end{proposition}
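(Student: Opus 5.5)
We argue both directions directly from Definition~\ref{def:CD} together with Lemma~\ref{lem:basic}. Throughout, $L=(m-1)M_m(r)$. By Lemma~\ref{lem:basic}(2) we always have $\s(G)\ge L+1$. This holds because any $m$-good set $T$ of size $M_m(r)$ gives a sequence $T^{m-1}$ of length $L$ with no zero-sum subsequence of length $m$.

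For the forward direction, assume $G$ has Property~D. Lemma~\ref{lem:basic}(3) gives $\s(G)-1=(m-1)M_m(r)=L$, so $\s(G)=L+1$. Property~D then says that every sequence of length $\s(G)-1=L$ without a zero-sum subsequence of length $m$ has the form $T^{m-1}$, which is the second condition.

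For the converse, assume $\s(G)=L+1$ and that every sequence of length $L$ without a zero-sum subsequence of length $m$ has the form $T^{m-1}$. Since $\s(G)-1=L$, this is literally Property~D. The only point to check is that the definition is applied at the right length. The hypothesis $\s(G)=L+1$ ensures that the lengths $\s(G)-1$ and $L$ coincide.

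For the ``in particular'' part, let $S$ have length $|S|\ge L$, have no zero-sum subsequence of length $m$, and not be of the form $T^{m-1}$. We split into two cases.
\begin{itemize}[label=--]
\item If $|S|>L$, then $\s(G)\ge|S|+1>L+1$, so the first condition fails and the forward direction shows that Property~D fails.
\item If $|S|=L$, then either $\s(G)>L+1$, and we conclude as before, or $\s(G)=L+1$. In the latter case $S$ is a sequence of length $\s(G)-1$ violating Property~D directly.
\end{itemize}

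This step needs care. When $|S|>L$ one might instead try to pass to a subsequence of length $\s(G)-1$ that is not of the form $T^{m-1}$, but that is unnecessary. The clean route is as above: if Property~D held, then $\s(G)=L+1$ by Lemma~\ref{lem:basic}(3), so any sequence of length greater than $L$ contains a zero-sum subsequence of length $m$, which contradicts the existence of $S$.

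The Property~C version is identical, with $\eta$, ``short zero-sum'', $N_m(r)$, and the corresponding halves of Lemma~\ref{lem:basic}(2),(3). I expect no real obstacle. The only subtlety is remembering that $\s(G)\ge L+1$ holds unconditionally, so that the failure of $\s(G)=L+1$ can only mean $\s(G)>L+1$.
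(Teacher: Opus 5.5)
Your proof is correct and follows the paper's argument. Lemma~\ref{lem:basic}(3) gives the forward direction, and the converse is the definition itself. For the ``in particular'' part, the paper splits on whether $\s(G)-1>L$ rather than on $|S|$; this comes to the same thing, since $L\le|S|\le\s(G)-1$.
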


\begin{proof}
Lemma~\ref{lem:basic}(3) gives one direction. Conversely the two conditions are Property~D. If $S$ is such a sequence
and $\s(G)-1>L$, the first condition fails; otherwise $\s(G)-1\le L\le|S|\le\s(G)-1$ and the second fails.
\end{proof}

\begin{corollary}\label{cor:D}
Let $m$ be even. If $C_m^r$ has Property~D, then $\s(C_m^r)=2^r(m-1)+1$; if it has Property~C, then
$\eta(C_m^r)=(2^r-1)(m-1)+1$. A single sequence of length greater than $2^r(m-1)$ without zero-sum subsequence of
length $m$ disproves Property~D, and one of length greater than $(2^r-1)(m-1)$ without short zero-sum subsequence
disproves Property~C.
\end{corollary}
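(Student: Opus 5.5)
The plan is to derive the corollary directly from Corollary~\ref{cor:values}(1) and Proposition~\ref{prop:equiv}, or equivalently from Lemma~\ref{lem:basic}(3).

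\textbf{Values of the critical lengths.} For even $m$, Corollary~\ref{cor:values}(1) gives $M_m(r)=2^r$ and $N_m(r)=2^r-1$. The critical lengths of Proposition~\ref{prop:equiv} are therefore $L=2^r(m-1)$ for Property~D and $L'=(2^r-1)(m-1)$ for Property~C.

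\textbf{Equality statements.} If $C_m^r$ has Property~D, Lemma~\ref{lem:basic}(3) gives $\s(C_m^r)-1=(m-1)M_m(r)=2^r(m-1)$. Similarly, Property~C gives $\eta(C_m^r)-1=(m-1)N_m(r)=(2^r-1)(m-1)$.

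\textbf{Disproving statements.} Let $S$ have length greater than $2^r(m-1)$ and no $m$-term zero-sum. Then $\s(C_m^r)-1\ge|S|>2^r(m-1)$, which contradicts the equality just derived, so Property~D fails. One could also argue via Proposition~\ref{prop:equiv}: $S$ cannot have the form $T^{m-1}$, because $T$ would be $m$-good and so $|T|\le M_m(r)=2^r$, giving $|S|\le 2^r(m-1)$. Either route works.

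The Property~C case is identical. A sequence of length greater than $(2^r-1)(m-1)$ without short zero-sum forces $\eta(C_m^r)-1>(2^r-1)(m-1)$, contradicting the equality for $\eta$.

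There is no real obstacle here. The only substantive input is $M_m(r)=2^r$ and $N_m(r)=2^r-1$, which is already established in Corollary~\ref{cor:values}(1) via the divisor lemma with $d=2$.
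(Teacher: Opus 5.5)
Your proposal is correct and is essentially the paper's proof, which cites Lemma~\ref{lem:basic}(3), Corollary~\ref{cor:values}(1) and Harborth's bound \eqref{eq:harborth}. Harborth's lower bound, which the paper lists separately, is already built into your use of Lemma~\ref{lem:basic}(3), since its proof uses part (2) together with $M_m(r)\ge 2^r$.
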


\begin{proof}
Lemma~\ref{lem:basic}(3), Corollary~\ref{cor:values}(1) and \eqref{eq:harborth}.
\end{proof}

For odd $m$ with $3\mid m$, Corollary~\ref{cor:values}(3) shows that in ranks $3$ and $4$ the critical lengths are
$9(m-1)$ and $20(m-1)$. Whenever $\s(C_p^r)=c_r(p-1)+1$ for every prime $p\mid m$ (with $c_3=9$, $c_4=20$), this is the
extremal length by {\cite[Corollary~4.5]{EEGKR}}; for example $\s(C_{15}^3)=127$, since $\s(C_3^3)=19$ \cite{EEGKR} and
$\s(C_5^3)=37$ \cite{GHST}. Property~D then becomes a uniqueness statement at the critical length.

\section{Rigid sequences and their products}\label{sec:rigid}

Throughout, $n=2^am$ with $a\ge1$ and odd $m\ge3$.

\begin{definition}\label{def:rigid}
A sequence $S$ over $C_n^r$ without zero-sum subsequence of length $n$ is \emph{rigid} if every term has
multiplicity $n-1$ (\emph{heavy}) or at most $m-1$ (\emph{light}), and every $n$-term zero-sum $W$ with
$\supp(W)\subseteq\supp(S)$, with arbitrary multiplicities, is of the form $W=x^n$, or $W=h^a\ell^{n-a}$ with $h$
heavy and $\ell$ light.
\end{definition}

\begin{definition}\label{def:product}
Let $S_i$ be a sequence over $C_n^{r_i}$ with heavy and light terms, $i=1,\dots,t$. The \emph{product}
$S_1\otimes\dots\otimes S_t$ is the sequence over $C_n^{r_1+\dots+r_t}$ whose terms are the tuples
$(x_1,\dots,x_t)$, $x_i\in\supp(S_i)$, with at most one light coordinate; a tuple has multiplicity $n-1$ if all
coordinates are heavy, and $m-1$ otherwise.
\end{definition}

If $S_i$ has $H_i$ heavy and $L_i$ light terms, the product has $\prod H_i$ heavy and
$\sum_iL_i\prod_{j\ne i}H_j$ light terms. Writing $L_i=\lambda_iH_i$, the light density of the product is
$\lambda_1+\dots+\lambda_t$, and its length is
\begin{equation}\label{eq:prodlength}
\Big(\prod_iH_i\Big)\Big((n-1)+(m-1)\sum_i\lambda_i\Big).
\end{equation}

\begin{theorem}\label{thm:rigidprod}
If $S_1,\dots,S_t$ are rigid, then $S_1\otimes\dots\otimes S_t$ has no zero-sum subsequence of length $n$.
\end{theorem}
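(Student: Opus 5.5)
Suppose $W$ is an $n$-term zero-sum subsequence of $P=S_1\otimes\dots\otimes S_t$. The plan is to project $W$ onto each factor and use rigidity to pin down the shape of every projection. Then we compare these shapes with the multiplicity bounds in $P$ and reach a contradiction.

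For each $i$, let $W_i$ be the image of $W$ under the projection $C_n^{r_1+\dots+r_t}\to C_n^{r_i}$. Each $W_i$ is an $n$-term zero-sum over $C_n^{r_i}$, and $\supp(W_i)\subseteq\supp(S_i)$. By rigidity of $S_i$ (Definition~\ref{def:rigid}), either $W_i=x_i^n$, or $W_i=h_i^{a_i}\ell_i^{n-a_i}$ with $h_i$ heavy and $\ell_i$ light.

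\textbf{Case 1: every $W_i$ is constant.} Then $W$ is $n$ copies of the single tuple $(x_1,\dots,x_t)$. Every term of $P$ has multiplicity at most $n-1$, so this is impossible.

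\textbf{Case 2: some $W_i$ is of mixed form.} Here we also know $0<a_i<n$. In coordinate $i$, exactly $n-a_i$ terms of $W$ have the light entry $\ell_i$. Each such term of $P$ has multiplicity $m-1$, since its other coordinates are all heavy. Now fix the $i$-th coordinate at $\ell_i$ and count: the terms of $W$ with $i$-th coordinate $\ell_i$ must have heavy entries in every other coordinate $j$.
\begin{itemize}
\item For $j$ with $W_j=x_j^n$, this is automatic, and $x_j$ must then be heavy. (If $x_j$ were light, every term of $W$ would have light $j$-coordinate, so none could have light $i$-coordinate.)
\item For $j\ne i$ with $W_j$ mixed, the $n-a_j$ terms with light $j$-coordinate are disjoint from the $n-a_i$ terms with light $i$-coordinate, since a tuple has at most one light coordinate. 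Among the terms with $i$-coordinate $\ell_i$, the $j$-coordinate is therefore always $h_j$.
\end{itemize}
So all $n-a_i$ terms with $i$-coordinate $\ell_i$ are copies of the single tuple $(\ell_i,(y_j)_{j\ne i})$, where $y_j\in\{x_j,h_j\}$ is the relevant heavy element. Its multiplicity in $P$ is $m-1$, so $n-a_i\le m-1$, that is, $a_i\ge n-m+1$.

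Next I want to show $a_i$ is forced to be a specific residue. Since $h_i^{a_i}\ell_i^{n-a_i}$ is zero-sum, $(n-a_i)(\ell_i-h_i)=0$ in $C_n^{r_i}$. Let $k=n-a_i$, so $1\le k\le m-1$. Then the sequence $h_i^{n-k}\ell_i^{k}$ is an $n$-term zero-sum. Because $k\le m-1$ and $n-k\le n-1$, it is a subsequence of $S_i$ itself. This contradicts the assumption that $S_i$ has no zero-sum subsequence of length $n$.

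The main point to check is that $W_i$ mixed really forces the multiplicity bound $n-a_i\le m-1$. The disjointness of light positions across different coordinates is what makes the terms with light $i$-coordinate collapse to a single tuple of multiplicity $m-1$. Once that holds, the contradiction comes directly from $S_i$ itself. I would also double-check the degenerate cases where $\ell_i=h_i$ (excluded because they are distinct support elements) or $a_i=0$ (then $W_i=\ell_i^n$ is constant and falls under Case 1).
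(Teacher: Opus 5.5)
Your proof is correct, and it reaches the result by a non-inductive route. The paper inducts on $t$. If some projection $W^{(j)}$ is constant, it deletes the $j$-th block and checks that the shortened $W'$ still divides $\bigotimes_{i\ne j}S_i$, using that a term with light $j$-coordinate has multiplicity $m-1\le n-1$. It runs the collapse argument only when \emph{every} projection has the form $h_j^{a_j}\ell_j^{n-a_j}$: it derives $n-a_j\ge m$ from the fact that $S_j$ has no $n$-term zero-sum, and then contradicts the multiplicity $m-1$ of the collapsed tuple.

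You notice that constant projections are harmless in the collapse step, because they simply fix that coordinate, so one non-constant projection already suffices. The complementary case, where all projections are constant, is immediate because no term of $P$ has multiplicity $n$. This removes the induction and the multiplicity bookkeeping for deleted blocks. It also shows that only one factor's lack of an $n$-term zero-sum is used. The paper's peeling step does make explicit that constant coordinates reduce the problem to a smaller product, but that is not needed.

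Your paragraph deriving $(n-a_i)(\ell_i-h_i)=0$ is superfluous: once $n-a_i\le m-1$, the projection $W_i$ itself divides $S_i$.

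One caveat applies equally to the paper's proof. The step ``$W_i$ divides $S_i$'' needs $v_{\ell_i}(S_i)\ge n-a_i$, that is, the light terms of $S_i$ must occur exactly $m-1$ times. Definition~\ref{def:rigid} only requires at most $m-1$, while Definition~\ref{def:product} gives light tuples multiplicity $m-1$ regardless. With smaller light multiplicities the statement is false. For example, $S_1=0^{5}\cdot 3^{1}$ over $C_6$ is rigid, yet $(0,0)^4(3,0)^2$ is a $6$-term zero-sum subsequence of $S_1\otimes S_1$ (here $m-1=2$). So this is a tacit hypothesis of the theorem, satisfied by $S_{WZ}(n)$, not a defect of your route.
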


\begin{proof}
Induction on $t$; the case $t=1$ is the hypothesis. Let $P=S_1\otimes\dots\otimes S_t$ and let $W$ be an $n$-term
zero-sum subsequence of $P$. Its projection $W^{(j)}$ to the $j$-th block of coordinates is an $n$-term zero-sum
supported on $\supp(S_j)$, so by rigidity it is $x^n$ or $h_j^{a_j}\ell_j^{n-a_j}$.

Suppose first that $W^{(j)}=x^n$ for some $j$. Then every term of $W$ has $j$-th coordinate $x$. Deleting this
coordinate gives an $n$-term zero-sum $W'$ supported on $\supp(P')$, $P'=\bigotimes_{i\ne j}S_i$. If $x$ is heavy, a
term $(y,x)$ has the same multiplicity in $P$ as $y$ in $P'$. If $x$ is light, all other coordinates of $(y,x)$ are
heavy, and its multiplicity $m-1$ in $P$ is at most the multiplicity $n-1$ of $y$ in $P'$. So $W'$ is a subsequence of
$P'$, which contradicts the induction hypothesis.

Otherwise $W^{(j)}=h_j^{a_j}\ell_j^{n-a_j}$ for every $j$. Here $n-a_j\ge m$: otherwise $a_j\le n-1$ and
$n-a_j\le m-1$, and $W^{(j)}$ would be a subsequence of $S_j$. Consider the $n-a_j$ terms of $W$ with $j$-th
coordinate $\ell_j$. Each has all other coordinates heavy, and its $i$-th coordinate lies in
$\supp(W^{(i)})=\{h_i,\ell_i\}$, so it equals $h_i$. Hence all these terms are the single element
$(h_1,\dots,\ell_j,\dots,h_t)$, whose multiplicity in $P$ is $m-1<n-a_j$. This contradicts $W\mid P$.
\end{proof}

\begin{remark}
The theorem uses neither the decomposition $C_n=C_{2^a}\oplus C_m$ nor any labels; it only uses rigidity. A product
of rigid sequences need not be rigid: if $W^{(1)}=h^a\ell^{n-a}$ and $W^{(2)}=h'^b\ell'^{n-b}$ with $a+b\ge n$, then
$(h,h')^{a+b-n}(h,\ell')^{n-b}(\ell,h')^{n-a}$ is a zero-sum on the support of $S_1\otimes S_2$. This is why we take
products of rigid factors directly rather than iterating.
\end{remark}

\section{The sequences of Wang and Zeng are rigid}\label{sec:WZ}

We recall the construction of {\cite[Section~3]{WZ}}. Write $n=km$ with $k=2^a\ge2$ and odd $m\ge3$. The nine labels
$P=(x,y,z)$ of Table~\ref{tab:WZ} form the set of Elsholtz {\cite[p.~354]{Els04}}. For each label and each listed
value $t$, let $q(P,t)\in\{0,\dots,n-1\}$ be determined by $q\equiv x\pmod m$, $q\equiv b(P,t)\pmod k$, and put
\[
g(P,t)=\big(y,\ x-z,\ q(P,t),\ t\big)\in C_n^4 .
\]
The sequence $S_{WZ}(n)$ consists of the $16$ heavy terms $g(P,t)$ of multiplicity $n-1$ and the $2$ light terms of
multiplicity $m-1$ listed in the table. Its length is $16(n-1)+2(m-1)$, and it has no zero-sum subsequence of length
$n$ {\cite[Proposition~3.5]{WZ}}.

\begin{table}[h]
\centering\small
\begin{tabular}{@{}ccccc@{}}
\toprule
row & $P=(x,y,z)$ & $b(P,t)$ & $t$, heavy & $t$, light\\
\midrule
A & $(0,0,0)$ & $-t$ & $0,1$ & --\\
B & $(1,0,0)$ & $-1$ & $2$ & $2+k$\\
C & $(0,1,0)$ & $-1$ & $0,1$ & --\\
D & $(0,0,1)$ & $-1$ & $1$ & $1+k$\\
E & $(1,0,1)$ & $1-t$ & $0,1$ & --\\
F & $(0,1,1)$ & $-1$ & $0,1$ & --\\
G & $(1,1,2)$ & $0$ & $0,1$ & --\\
H & $(1,2,2)$ & $0$ & $0,1$ & --\\
I & $(2,1,2)$ & $0$ & $0,1$ & --\\
\bottomrule
\end{tabular}
\caption{The construction of Wang and Zeng \cite{WZ}.}\label{tab:WZ}
\end{table}

\begin{proposition}\label{prop:WZrigid}
For every $n=2^am$ with $a\ge1$ and odd $m\ge3$, the sequence $S_{WZ}(n)$ is rigid. More precisely, every $n$-term
zero-sum supported on $\supp(S_{WZ}(n))$ is either $x^n$, or $g(B,2)^{n-j}g(B,2+k)^{j}$ or $g(D,1)^{n-j}g(D,1+k)^{j}$
with $m\mid j$.
\end{proposition}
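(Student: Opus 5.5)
We will classify all $n$-term zero-sums $W=\prod_g g^{c_g}$, $c_g\ge0$, $\sum c_g=n$, over the $18$ points of $\supp(S_{WZ}(n))$ directly, with arbitrary multiplicities. Rigidity then follows at once: $g(B,2),g(D,1)$ are heavy, $g(B,2+k),g(D,1+k)$ are light, and $S_{WZ}(n)$ has no $n$-term zero-sum subsequence by \cite[Proposition~3.5]{WZ}.

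The plan is to split each coordinate relation by the Chinese remainder theorem, $C_n=C_k\oplus C_m$, and first work modulo $m$. Each term is $g(P,t)\equiv(y,x-z,x,t)\pmod m$. The first three coordinates are an invertible linear image of $(x,y,z)$, so modulo $m$ the $y$, $x-z$, $x$ conditions say exactly $\sum c_g P(g)\equiv0\pmod m$, where $P(g)$ is the label of $g$. Let $e_P=\sum_{\text{label }P}c_g$, so $\sum_P e_P=n\equiv0\pmod m$. The nine labels form Elsholtz's set, which is $m$-good for every odd $m$ with these explicit integer coordinates in $\{0,1,2\}$. We then argue as in the proof that this set is $m$-good: write each label as a vector in $\Z^3$ together with the constant coordinate. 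Then $\sum_P e_P(1,P)\equiv0\pmod m$ with $\sum e_P=n$, and we aim to show that all weight sits on a single label. This does not follow formally from $m$-goodness, because the $e_P$ may exceed $m-1$. Instead we use that $0\le e_P$ and $\sum e_P=n$ is fixed, and pass to integer relations. The label vectors have small integer entries, so $\sum e_P(1,P)=m\cdot v$ for an integer vector $v$ in a bounded region. We then use the convex-geometric certificate: some linear functional separates the points, so the only nonnegative combinations of weight $n$ landing in $m\Z^4$ are concentrated on one label. We expect this to need a short case analysis over the possible $v$.

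Once a single label $P$ carries all of $W$, its terms differ only in the last coordinate $t$ and in the $C_k$-part of $q$. For labels with only heavy terms $t\in\{0,1\}$, the last coordinate gives $\sum c_g t_g\equiv0\pmod n$ with $c_0+c_1=n$, so $c_1\in\{0,n\}$ and $W=x^n$.

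For $B$ the two terms have $t=2$ and $t=2+k$, with the same $q$ (since $b=-1$). The last coordinate gives $2n+kj\equiv0\pmod n$ with $j=c_{g(B,2+k)}$, that is, $kj\equiv0\pmod{km}$, so $m\mid j$. The label $D$ works the same way with $t=1,1+k$. These are exactly the listed exceptional forms.

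The main obstacle is the modulo-$m$ step, where weights exceed $m-1$: we must rule out mixed-label zero-sums such as $P^{a}Q^{b}\cdots$ with large multiplicities. One option is to check directly that for Elsholtz's nine points every relation $\sum e_P(1,P)\equiv 0\pmod m$ with $\sum e_P=n$ is trivial. However, such relations exist in general, for instance $e_P=m$ on several labels. We would therefore also use the $C_k$-components $q\equiv b\pmod k$ and the coordinate $t$ modulo $k$ to kill these. Concretely, we would write $e_P=m f_P+\epsilon_P$ and use the $2$-part constraints from the table's $b(P,t)$ column to force a single label. We expect the paper handles this by a parity/lifting argument over $C_{2^a}$ combined with the cap property of the nine points mod $3$.
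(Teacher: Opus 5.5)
Your last step and your deduction of rigidity are correct and agree with Step~3 of the paper: inside a single row, $\sum t\equiv0\pmod n$ gives $x^n$, or $m\mid j$ in rows B and D. The gap is the reduction to a single row. That reduction is the whole content of the proposition, and you leave it as an expectation.

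As you note yourself, the conditions modulo $m$ cannot do it, since $e_P=m$ on $k$ different labels is allowed. The repair you suggest, adding only the $C_k$-components of $q$ and of $t$, is also insufficient. For $n=6$ take $W=g(A,1)^3g(C,1)^3$:
\begin{itemize}
\item the labels sum to $(0,3,0)\equiv0\pmod 3$;
\item all six terms have $b=-1$, so $\sum b\equiv0\pmod 2$;
\item $\sum t=6\equiv0\pmod 6$.
\end{itemize}
Yet $\sum y=3\not\equiv0\pmod 6$, so $W$ is not a zero-sum. What excludes it is the $2$-part of the first coordinate, which your CRT split throws away. A cap property modulo~$3$ cannot help either, since $3\nmid m$ in general.

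The missing idea is to treat the first two coordinates modulo $n$ as bounded integers. Since $y\in\{0,1,2\}$ and $x-z\in\{-1,0,1\}$, we get $\sum y\in\{0,n,2n\}$ and $\sum(x-z)\in\{-n,0,n\}$. So the average $(\bar Y,\bar V)$ is an integer point of the triangle spanned by the nine points $(y,x-z)$, with vertices $(0,1),(0,-1),(2,-1)$. Every integer point of this triangle lies on its boundary, so $W$ lies on one edge: rows D, F, G, H; or A, B, D, E; or B, C, H, I.

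Only then do the remaining conditions cut $W$ down to one row. These are $\sum x\equiv0\pmod m$, $\sum b\equiv0\pmod k$ and $\sum t\equiv0\pmod n$, together with the same integrality trick on each edge (e.g.\ $\beta-\delta\in\{-n,0,n\}$ on the edge $Y=0$). For instance, on the edge $V=-1$ let $a$ be the number of terms in rows G, H. Then $m\mid a$ from $\sum x$ and $k\mid n-a$ from $\sum b$, so $a\in\{0,n\}$. After that, $\sum y\in\{0,n\}$ (or $\{n,2n\}$) separates the two remaining rows.
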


\begin{proof}
Let $W$ be an $n$-term zero-sum supported on $\supp(S_{WZ}(n))$, with arbitrary multiplicities; we only use that
$W$ has $n$ terms and that its four coordinate sums vanish modulo $n$. The argument follows Steps~1 and 2 of the proof of
{\cite[Proposition~3.5]{WZ}}; we give it in full since no bound on the multiplicities is used.

\emph{Step 1: the triangle.} The first coordinate $y$ takes values in $\{0,1,2\}$, so $\sum y\in\{0,n,2n\}$; the second
coordinate $x-z$ takes values in $\{-1,0,1\}$, so $\sum(x-z)\in\{-n,0,n\}$. Hence the average
$(\bar Y,\bar V)=\big(\sum y,\sum(x-z)\big)/n$ is an integer point, and it lies in the convex hull of the nine label
points $(y,x-z)$, which is the triangle $Y\ge0$, $V\ge-1$, $Y+V\le1$ with vertices $(0,1)$, $(0,-1)$, $(2,-1)$. Its integer
points are the three vertices and $(0,0)$, $(1,0)$, $(1,-1)$, all on the boundary. So one of the three inequalities is an
equality for the average, hence for every term of $W$, and $W$ is supported on one side:
\[
V=-1:\ \text{rows D, F, G, H};\qquad Y=0:\ \text{rows A, B, D, E};\qquad Y+V=1:\ \text{rows B, C, H, I}.
\]

\emph{Step 2: one row.} The third coordinate gives $\sum x\equiv0\pmod m$ and $\sum b\equiv0\pmod k$, and the fourth gives
$\sum t\equiv0\pmod n$. Write $\alpha,\beta,\gamma,\delta,\varepsilon,\zeta,\eta,\theta,\iota$ for the numbers of terms of $W$ in rows
A, B, C, D, E, F, G, H, I.

\emph{Side $V=-1$.} Here $x=0$, $b=-1$ in rows D, F and $x=1$, $b=0$ in rows G, H. With $a=\eta+\theta$ we get $a\equiv0\pmod m$
and $-(n-a)\equiv0\pmod k$, so $n\mid a$ and $a\in\{0,n\}$. If $a=0$, the values $y\in\{0,1\}$ of rows D, F and $\sum
y\in\{0,n\}$ force a single row; if $a=n$, the values $y\in\{1,2\}$ of rows G, H and $\sum y\in\{n,2n\}$ do the same.

\emph{Side $Y=0$.} Here $x-z$ is $0,1,-1,0$ in rows A, B, D, E, so $\beta-\delta\in\{-n,0,n\}$: either $W$ lies in row B or
in row D, or $\beta=\delta$. In the last case $\sum x=\beta+\varepsilon\equiv0\pmod m$, and
\[
\sum b=-\sum_{\mathrm A}t-\beta-\delta+\sum_{\mathrm E}(1-t),\qquad
\sum t=\sum_{\mathrm A}t+(2\beta+kj_{\mathrm B})+(\delta+kj_{\mathrm D})+\sum_{\mathrm E}t ,
\]
where $j_{\mathrm B}$, $j_{\mathrm D}$ count the terms with $t=2+k$ and $t=1+k$. Adding the two congruences modulo $k$
gives $\beta+\varepsilon\equiv0\pmod k$, hence $\beta+\varepsilon\in\{0,n\}$. If $\beta+\varepsilon=n$, then
$\alpha=\delta=0$, so $\beta=\delta=0$ and $W$ lies in row E; if $\beta+\varepsilon=0$, then $W$ lies in row A.

\emph{Side $Y+V=1$.} Here $(y,x-z)$ is $(0,1),(1,0),(2,-1),(1,0)$ in rows B, C, H, I, so $\sum(x-z)=\beta-\theta\in\{-n,0,n\}$:
either $W$ lies in row B or in row H, or $\beta=\theta$ and then $\gamma+\iota=n-2\beta$. In the last case $x$ is $1,0,1,2$
and $b$ is $-1,-1,0,0$ in rows B, C, H, I, so $\beta+\gamma\equiv0\pmod k$ and $2\beta+2\iota\equiv0\pmod m$. As
$\beta+\gamma=n-(\beta+\iota)$ and $m$ is odd, $\beta+\iota\equiv0$ modulo $k$ and modulo $m$, so $\beta+\iota\in\{0,n\}$.
If $\beta+\iota=0$, then $\beta=\theta=0$ and $W$ lies in row C; if $\beta+\iota=n$, then $\gamma=\theta=0$, so $\beta=0$ and $W$
lies in row I.

\emph{Step 3: within a row.} If $W$ lies in a row other than B, D, the fourth coordinates are $0$ and $1$, and $\sum
t\in\{0,n\}$ forces all of them to be equal, so $W=x^n$. In rows B and D the fourth coordinates are $t_0$ and $t_0+k$; if
$j$ terms use $t_0+k$, then $n\mid nt_0+kj$, so $m\mid j$.
\end{proof}

We checked the statement independently by computer for $n=6,10,12,20,24$: on the support of $S_{WZ}(n)$ the
$n$-term zero-sums are exactly the $18$ sequences $x^n$ and the sequences listed in the proposition. For the same $n$
we also verified Theorem~\ref{thm:rigidprod} for $S_{WZ}(n)\otimes S_{WZ}(n)$ directly. These checks are not used in the proofs.

\begin{proof}[Proof of Theorem~\ref{thm:main}]
Let $j=\lfloor r/4\rfloor\ge1$. By Proposition~\ref{prop:WZrigid} and Theorem~\ref{thm:rigidprod}, the $j$-fold
product of $S_{WZ}(n)$ is a sequence over $C_n^{4j}$ without $n$-term zero-sum. Its light density is $j\cdot2/16=j/8$,
so by \eqref{eq:prodlength} its length is $2^{4j}\big((n-1)+(m-1)j/8\big)$. Doubling $r-4j$ times
(Lemma~\ref{lem:double}) gives length $2^r(n-1)+(m-1)2^{r-3}j$ over $C_n^r$. The sequence contains a term of
multiplicity $n-1$, so Lemma~\ref{lem:eta} gives the bound for $\eta$.
\end{proof}

\section{Glued sequences for \texorpdfstring{$n=2m$}{n=2m}}\label{sec:glued}

Theorem~\ref{thm:rigidprod} applies to all $n=2^am$, but its hypothesis, rigidity, is a statement about all zero-sums
on the support, which we can verify for the sequences of Wang and Zeng but not for the rank-five block of
Section~\ref{sec:rank5}. For $n=2m$ there is a second, more concrete route. A \emph{glued} sequence is described by
its labels in $\F_2^r$ and its support modulo $m$; the hypotheses of the corresponding product theorem
(Theorem~\ref{thm:gluedprod}) are $m$-goodness of the support and a condition on the labels, both of which are checked
pointwise. This is the route used for Theorem~\ref{thm:main2}.

In this section $n=2m$ with $m\ge3$ odd. We identify $C_{2m}^r=C_2^r\oplus C_m^r$ and write $\pi(g)\in\F_2^r$ (the
\emph{label}) and $\rho(g)\in(\Z/m\Z)^r$ for the components.

\begin{definition}\label{def:glued}
A sequence $S=\prod_{h\in H}h^{2m-1}\prod_{\ell\in L}\ell^{m-1}$ over $C_{2m}^r$ is \emph{glued} if the labels
$\pi(h)$, $h\in H$, are pairwise distinct, $\rho$ is injective on $H\cup L$, and $A=\rho(H\cup L)$ is $m$-good.
\end{definition}

\begin{lemma}[reduction]\label{lem:red}
Let $S$ be glued and let $W$ be a $2m$-term zero-sum with $\supp(W)\subseteq H\cup L$ and multiplicities $d(x)$. Then
either $d(x)\le m-1$ for all $x$, or $W=x^{2m}$, or $W=x^my^m$ with $x\ne y$ and $\pi(x)=\pi(y)$. Consequently $S$ has
no $2m$-term zero-sum subsequence if and only if there is no \emph{relation}: multiplicities $0\le d\le m-1$ on
$H\cup L$ with $\sum d=2m$, $\sum d(x)\rho(x)=0$ and $\sum_{d(x)\text{ odd}}\pi(x)=0$.
\end{lemma}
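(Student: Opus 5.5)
The plan is to split each element into its two components and read off what the zero-sum condition says in each. Write $g=(\pi(g),\rho(g))$. A $2m$-term sequence $W$ with multiplicities $d(x)$ is a zero-sum exactly when two conditions hold. The first is $\sum d(x)\rho(x)=0$ in $(\Z/m\Z)^r$. The second is $\sum d(x)\pi(x)=0$ in $\F_2^r$, which is the same as $\sum_{d(x)\text{ odd}}\pi(x)=0$.

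Suppose some $x$ has $d(x)\ge m$. Then $x$ is used at most $2m$ times and the remaining terms number at most $m$. I would reduce modulo the $\rho$-component. Define multiplicities $e$ on $A=\rho(H\cup L)$ by $e(\rho(y))=d(y)$, which is well defined because $\rho$ is injective on $H\cup L$. Then $e$ sums to $2m$ and gives a zero-sum in $C_m^r$.

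\emph{Key step.} Subtract $m$ from $e(\rho(x))$. Since $m\rho(x)=0$, the result $e'$ is still a zero-sum, now of length exactly $m$, and all its multiplicities are at most $m$.
\begin{itemize}
\item If every multiplicity of $e'$ is at most $m-1$, then $e'$ is a zero-sum subsequence of $A^{m-1}$ of length $m$. This contradicts $m$-goodness of $A$.
\item Otherwise some point has multiplicity exactly $m$ in $e'$, so $e'$ consists of that single point repeated $m$ times. Call it $\rho(y)$.
\end{itemize}
In the second case, if $y=x$ then $W=x^{2m}$. If $y\ne x$ then $W=x^my^m$. Since $m$ is odd, the label condition becomes $\pi(x)+\pi(y)=0$, that is $\pi(x)=\pi(y)$. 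This gives the trichotomy.

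For the "consequently" part, consider first the forward direction: assume $S$ has no $2m$-term zero-sum subsequence. A relation would give a zero-sum $W$ with all $d\le m-1$, and this is a subsequence of $S$ because every term of $S$ has multiplicity at least $m-1$. So no relation exists.

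For the converse, assume there is no relation and let $W$ be a $2m$-term zero-sum subsequence of $S$. By the trichotomy, $W$ falls into one of three cases.
\begin{itemize}
\item If all $d\le m-1$, then $d$ is a relation, which is impossible.
\item If $W=x^{2m}$, this needs multiplicity $2m$ in $S$, but the maximum is $2m-1$.
\item If $W=x^my^m$, both $x$ and $y$ need multiplicity at least $m$, so both are in $H$. Their labels are then distinct, contradicting $\pi(x)=\pi(y)$.
\end{itemize}

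The main subtlety is the key step, namely reducing the $2m$-term relation to an $m$-term one in $A$ using $m\rho(x)=0$. Everything else is bookkeeping. I would also check that the parity of $m$ is used only to turn $m\pi(x)+m\pi(y)$ into $\pi(x)+\pi(y)$.
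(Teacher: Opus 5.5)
Your proof is correct and essentially follows the paper's argument. The paper reduces all multiplicities modulo $m$ at once and splits according to $\sum e\in\{0,m,2m\}$, whereas you subtract $m$ from a single term with $d(x)\ge m$; both rest on $m\rho(x)=0$, on injectivity of $\rho$ together with $m$-goodness of $A$, and on oddness of $m$ for the label condition, and your ``consequently'' part matches the paper's.
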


\begin{proof}
Let $e(x)\in[0,m-1]$ be $d(x)$ reduced modulo $m$. Then $\sum e(x)\rho(x)=0$ and $\sum e\in\{0,m,2m\}$. The value $m$
contradicts $m$-goodness; if $\sum e=2m$, then $e=d$; otherwise all $d(x)\in\{0,m,2m\}$. For $W=x^my^m$ the label
condition is $m(\pi(x)+\pi(y))=\pi(x)+\pi(y)=0$. Heavy terms have distinct labels and light terms have
multiplicity $m-1<m$, so the last two forms are not subsequences of $S$.
\end{proof}

\begin{lemma}\label{lem:prodgood}
If $A_1\subseteq C_m^{r_1}$ and $A_2\subseteq C_m^{r_2}$ are $m$-good, so is $A_1\times A_2$.
\end{lemma}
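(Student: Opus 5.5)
Suppose $W$ is a zero-sum subsequence of length $m$ of $(A_1\times A_2)^{m-1}$. The plan is to project it to each factor and use $m$-goodness of $A_1$ and $A_2$ to force $W$ to be a single element repeated $m$ times. Since every element of $A_1\times A_2$ has multiplicity $m-1<m$ in $(A_1\times A_2)^{m-1}$, that conclusion is impossible, which proves the lemma.

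First I handle $A_1$. Let $W^{(1)}$ be the projection of $W$ to the first $r_1$ coordinates. It is an $m$-term zero-sum supported on $A_1$, but its multiplicities may be as large as $m$, since several elements of $W$ can share a first coordinate. I claim that an $m$-term zero-sum on an $m$-good set $A$ with arbitrary multiplicities must be $x^m$. Indeed, if every multiplicity is at most $m-1$, then it is a subsequence of $A^{m-1}$, contradicting $m$-goodness. A multiplicity above $m-1$ in a sequence of length $m$ can only be $m$ itself, so the zero-sum is $x^m$. Hence $W^{(1)}=x_1^m$, meaning every term of $W$ has first block $x_1$.

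Next I apply the same argument to the second projection $W^{(2)}$, which gives $W^{(2)}=x_2^m$. Then $W=(x_1,x_2)^m$. This element occurs only $m-1$ times in $(A_1\times A_2)^{m-1}$, so $W$ cannot be a subsequence, and we have a contradiction.

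The one point that needs care is that projections can raise multiplicities beyond $m-1$. The observation that a length-$m$ sequence with a multiplicity of at least $m$ must be constant disposes of this cleanly. This is the same mechanism that appears in the proof of Lemma~\ref{lem:double}.
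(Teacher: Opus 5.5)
Your proof is correct and is essentially the paper's argument: you project to $A_1$ and use the fact that an $m$-term zero-sum on an $m$-good set, with unrestricted multiplicities, must be $x^m$. The only difference is cosmetic: once the first coordinate is fixed, the paper notes that the projection to $A_2$ has multiplicities at most $m-1$ and so contradicts $m$-goodness of $A_2$ directly, whereas you apply the same dichotomy to $A_2$ and get the contradiction from $W=(x_1,x_2)^m$.
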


\begin{proof}
Project an $m$-term zero-sum of $(A_1\times A_2)^{m-1}$ to $A_1$. If all multiplicities of the projection are at most
$m-1$, this contradicts $A_1$ being $m$-good. Otherwise the projection is $x^m$, and the second coordinates form an
$m$-term zero-sum in $A_2$ with multiplicities at most $m-1$.
\end{proof}

\begin{theorem}[product of glued sequences]\label{thm:gluedprod}
Let $S_i=H_i^{2m-1}L_i^{m-1}$, $i=1,2$, be glued sequences without $2m$-term zero-sum. Then $S_1\otimes S_2$
(Definition~\ref{def:product}) is glued and has no $2m$-term zero-sum.
\end{theorem}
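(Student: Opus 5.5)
The plan is to check the three gluing conditions directly, and then to exclude a $2m$-term zero-sum by projecting it to the two factors and applying the reduction lemma (Lemma~\ref{lem:red}) to each $S_i$.

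\emph{Gluedness.} The heavy terms of $P=S_1\otimes S_2$ are the pairs $(h_1,h_2)$ with $h_i\in H_i$. Their labels are $(\pi(h_1),\pi(h_2))$, which are pairwise distinct because the heavy labels of each $S_i$ are. The map $\rho(x_1,x_2)=(\rho(x_1),\rho(x_2))$ is injective on $\supp(P)$ because $\rho$ is injective on each $H_i\cup L_i$. Finally, $\rho(\supp P)\subseteq A_1\times A_2$, which is $m$-good by Lemma~\ref{lem:prodgood}. A subset of an $m$-good set is clearly $m$-good, so $P$ is glued.

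\emph{No $2m$-term zero-sum.} Suppose $W$ is a $2m$-term zero-sum subsequence of $P$. Since $P$ is glued, Lemma~\ref{lem:red} says we may assume that every multiplicity $d$ of $W$ is at most $m-1$. Indeed, the other two forms $x^{2m}$ and $x^my^m$ with $\pi(x)=\pi(y)$ are not subsequences of a glued sequence. Now project to the $i$-th block of coordinates. The projection $W^{(i)}$ is a $2m$-term zero-sum supported on $H_i\cup L_i$, so Lemma~\ref{lem:red} applied to $S_i$ leaves three cases.
\begin{enumerate}[label=\textup{(\roman*)}]
\item All multiplicities of $W^{(i)}$ are at most $m-1$. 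Since heavy terms have multiplicity $2m-1$ and light terms $m-1$, $W^{(i)}$ is then a subsequence of $S_i$, which is a contradiction.
\item $W^{(i)}=x^{2m}$. Then deleting the $i$-th coordinate gives a $2m$-term zero-sum on $\supp(S_{3-i})$ with multiplicities still at most $m-1$. This is a subsequence of $S_{3-i}$, again a contradiction.
\end{enumerate}
Hence, for both $i=1,2$, we are in the third case: $W^{(1)}=x^my^m$ and $W^{(2)}=u^mv^m$ with $x\ne y$, $u\ne v$.

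\emph{The counting step.} This is the only step needing care. Since $\pi(x)=\pi(y)$ and heavy labels are distinct, at least one of $x,y$ is light; say $y$ is light. Likewise one of $u,v$ is light; say $v$. The terms of $W$ lie in $\{x,y\}\times\{u,v\}$; let their multiplicities be $a,b,c,e$ for $(x,u),(x,v),(y,u),(y,v)$. The projection counts give
\[
a+b=c+e=a+c=b+e=m,
\]
hence $a=e$. The tuple $(y,v)$ has two light coordinates, so it is not in $\supp(P)$ and $e=0$. Then $a=0$ and $b=m>m-1$, contradicting the bound on multiplicities. Thus $P$ has no $2m$-term zero-sum.

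I expect no step to be a real obstacle. The point needing attention is to invoke Lemma~\ref{lem:red} on $P$ first, so that the bound $d\le m-1$ is available in the final counting.
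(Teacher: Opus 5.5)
Your proof is correct and follows essentially the paper's route: you check that the product is glued, project a putative $2m$-term zero-sum to both factors, use Lemma~\ref{lem:red} to exclude the first two shapes, and show that the remaining shape $x^my^m$, $u^mv^m$ forces a light$\times$light term or a term of multiplicity $m$. The differences are minor. You apply Lemma~\ref{lem:red} to the glued product first, to get all multiplicities at most $m-1$, where the paper reads multiplicities off $S_1\otimes S_2$ directly. You fix $y,v$ light at the outset, so $e=0$ gives $a=0$ and then $b=m$ at once, whereas the paper splits into $a\in\{0,m\}$ and $0<a<m$. You also verify the injectivity of $\rho$ on the product, which the paper leaves implicit.
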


\begin{proof}
The heavy labels $(\pi(h_1),\pi(h_2))$ are distinct, and the support modulo $m$ is $m$-good by
Lemma~\ref{lem:prodgood}. Let $W$ be a $2m$-term zero-sum in the product, with projections $W_1,W_2$; each is a
$2m$-term zero-sum supported on $H_i\cup L_i$, so Lemma~\ref{lem:red} applies to it. The projections need not be
subsequences of $S_i$, since projection can increase multiplicities. If all multiplicities in $W_1$ are at most $m-1$,
then $W_1$ divides $S_1$, because every term of $S_1$ has multiplicity $2m-1$ or $m-1$; this is impossible. If
$W_1=x^{2m}$, then $W_2$ divides $S_2$. The same holds with the factors exchanged. It remains $W_1=x^mx'^m$ and
$W_2=y^my'^m$ with $\pi(x)=\pi(x')$ and $\pi(y)=\pi(y')$, so one of $x,x'$ and one of $y,y'$ is light. The
multiplicities of $(x,y),(x,y'),(x',y),(x',y')$ in $W$ are $a,m-a,m-a,a$. If $a\in\{0,m\}$, some term occurs $m$
times, so it is heavy$\times$heavy, i.e.\ $x$ and $x'$ are both heavy, which is impossible. If $0<a<m$, a
light$\times$light pair occurs, which is not a term of the product.
\end{proof}

\begin{lemma}\label{lem:WZglued}
For $n=2m$ the sequence $S_{WZ}(n)$ is glued.
\end{lemma}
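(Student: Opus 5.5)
With $n=2m$ we have $k=2$. I would verify the three conditions of Definition~\ref{def:glued} directly from Table~\ref{tab:WZ}: distinct heavy labels, injectivity of $\rho$ on the $18$ terms, and $m$-goodness of $A=\rho(H\cup L)$. Under $C_{2m}^4=C_2^4\oplus C_m^4$, the term $g(P,t)=(y,\,x-z,\,q,\,t)$ has $q\equiv x\pmod m$ and $q\equiv b(P,t)\pmod 2$. Hence
\[
\pi(g)=\big(y,\;x-z,\;b(P,t),\;t\big)\bmod 2,\qquad \rho(g)=\big(y,\;x-z,\;x,\;t\big)\bmod m .
\]

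\emph{Heavy labels.} I would list the $16$ heavy labels from the table and check that they are pairwise distinct. Terms in the same row with $t=0,1$ differ in the last coordinate mod $2$. Rows with different $(y,x-z)\bmod 2$ are separated directly. For rows sharing $(y\bmod 2,\,(x-z)\bmod 2)$, the third coordinate $b$ separates them. For example, A and E both have $(0,0)$ and $(0,\pm1)$ mod $2$, and $b=-t$ versus $1-t$ differ. This is a finite mod-$2$ comparison, and I expect it to be the main place where one actually has to look carefully at the table.

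\emph{Injectivity of $\rho$.} The labels $P=(x,y,z)$ are recovered from $(y,\,x-z,\,x)$, since $z=x-(x-z)$. Here $y,x\in\{0,1,2\}$ and $x-z\in\{-1,0,1\}$, and these are distinct mod $m\ge3$. Within a row the values of $t$ are distinct mod $m$: they are $0,1$; or $2$ and $2+k=4$; or $1$ and $3$. Since $m\ge3$ is odd, $4\not\equiv2$ and $3\not\equiv1$.

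\emph{$m$-goodness of $A$.} Let $W$ be an $m$-term zero-sum of $A^{m-1}$. I would rerun Steps~1--3 of the proof of Proposition~\ref{prop:WZrigid} with $n$ replaced by $m$. That proof used only the coordinates $y$, $x-z$, $x$, $t$ and their sums, plus separate mod-$k$ information from $b$, which is now absent.

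Step~1 works verbatim modulo $m$, because the coordinate ranges are small and $m\ge3$. The average is an integer point of the triangle, so $W$ lies on one side.

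For Step~2 mod $m$ alone:
\begin{itemize}
\item On side $V=-1$, $\sum x=\eta+\theta\equiv0$, so this count is $0$ or $m$, and a single row follows as before.
\item On side $Y=0$, if $\beta=\delta$ we get $\beta+\varepsilon\equiv0\pmod m$. That forces $\beta+\varepsilon\in\{0,m\}$ directly, with no mod-$k$ step needed.
\item On side $Y+V=1$, $2(\beta+\iota)\equiv0\pmod m$ with $m$ odd forces $\beta+\iota\in\{0,m\}$.
\end{itemize}

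Step~3: in a single row the last coordinates are two distinct residues $t_0\ne t_1$ mod $m$ with $t_1-t_0\in\{1,2\}$. A zero-sum forces $j(t_1-t_0)\equiv0$ with $0\le j\le m$, so $j\in\{0,m\}$ and $W=x^m$. This contradicts the multiplicity bound $m-1$.

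The obstacle is that these mod-$m$ arguments must not silently use the mod-$k$ congruences. I would check each case above for that.
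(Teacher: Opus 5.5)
Your proof is correct. For injectivity of $\rho$ and $m$-goodness of $A$ it is the paper's argument: rerun Steps~1--3 of Proposition~\ref{prop:WZrigid} with $n$ replaced by $m$. As you note, the mod-$k$ step on the side $Y=0$ is unnecessary because only $m$ terms occur, and $m$ odd handles the side $Y+V=1$ and rows B, D in Step~3.

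The only genuinely different step is the distinctness of the heavy labels. The paper gets it in one line from the zero-sum freeness of $S_{WZ}(n)$: two heavy terms $x\ne y$ with $\pi(x)=\pi(y)$ would give the $2m$-term zero-sum $x^my^m$. That argument works for any such zero-sum-free sequence. Your direct enumeration is self-contained and shows more, namely that the $16$ heavy labels are exactly all of $\F_2^4$.

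Your heuristic for that enumeration is slightly off, though. Rows B and D share $(y,x-z)\equiv(0,1)$ and also $b\equiv1\pmod 2$, so $b$ does not separate them. They are separated by $t\bmod 2$ (heavy $t=2$ in row B versus $t=1$ in row D). The explicit list you plan to write down catches this, so it is an imprecision rather than a gap.
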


\begin{proof}
If two heavy terms $x\ne y$ had the same residue modulo $2$, then $x^my^m$ would be a zero-sum subsequence of length
$2m$, since $m(x+y)\equiv0$ modulo $m$ and modulo $2$; so the heavy labels are distinct. Modulo $m$ the eighteen points
are $(y,x-z,x,t)$ with $t\in\{0,1\}$, except $t\in\{2,4\}$ in row B and $t\in\{1,3\}$ in row D. Points with different
labels differ in $(y,x-z,x)$, whose entries lie in $[-1,2]$, and within rows B and D the values of $t$ differ by $2$; so
$\rho$ is injective. For $m$-goodness, let an $m$-term zero-sum modulo $m$ with multiplicities at most $m-1$ be given.
Steps 1 and 2 of the proof of {\cite[Proposition~3.5]{WZ}} go through with $n$ replaced by $m$: they need $m\mid\sum y$,
$m\mid\sum(x-z)$, $m\mid\sum x$, and, on the third side, $m\mid\sum z$ together with $m$ odd. So all terms have the same
label. In that row the values of $t$ are $\{0,1\}$, $\{2,4\}$ or $\{1,3\}$, and $m\mid\sum t$ forces $m\mid j$ for the
number $j\le m-1$ of terms with the larger value; so one point is used $m$ times, a contradiction.
\end{proof}

\section{A rank-five block for \texorpdfstring{$n\equiv2\pmod 4$}{n = 2 mod 4}}\label{sec:rank5}

Let $V_4=\{z\in\{-1,0,1\}^4\setminus\{0\}:\ z_1^2+z_2^2+z_3^2\equiv z_4^2\pmod3\}$. It has $20$ points: the $12$
points $\pm e_i\pm e_4$ ($i\le3$) and the $8$ points $(\pm1,\pm1,\pm1,0)$. Reduced modulo $3$ it is a $20$-cap
\cite{EEGKR}.

\begin{lemma}\label{lem:V4}
$V_4$ is $m$-good for every odd $m\ge3$, and so is $A=\{0,1\}\times V_4\subseteq\Z^5$.
\end{lemma}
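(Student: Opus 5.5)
The second claim follows from the first by Lemma~\ref{lem:prodgood}: $\{0,1\}\subseteq C_m$ is $m$-good, since $m$ entries from $\{0,1\}$ sum to $0$ modulo $m$ only if they are all equal, and at most $m-1$ copies of each are available. So it suffices to prove that $V_4$ is $m$-good.

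Let $W$ be an $m$-term zero-sum over $V_4$ modulo $m$ with multiplicities $d(z)\le m-1$. Every coordinate of $W$ has integer sum in $[-m,m]$ and divisible by $m$, hence in $\{-m,0,m\}$. Therefore the average $c=\frac1m\sum d(z)z$ is an integer point of $\conv(V_4)\subseteq[-1,1]^4$, i.e.\ $c\in\{-1,0,1\}^4$, and we split according to $c$.

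\emph{Case $c\ne0$.} Choose a coordinate with $c_i=\pm1$. Then every term of $W$ has $z_i=c_i$, so $W$ lies on a face of the cube. We then recurse on the remaining coordinates, using the same integrality argument: the average of the remaining coordinates lies in the convex hull of the points of $V_4$ on that face. For instance, take $c_4=1$. The points with $z_4=1$ are $\pm e_i+e_4$, a cross-polytope in the first three coordinates. Its integer points are its six vertices and the origin. If the average is a vertex, $W$ is a single point used $m$ times, which is impossible. If the average is the origin, then $d(e_i+e_4)=d(-e_i+e_4)$ for each $i$, and this case is handled by the remaining constraints described next. Similar reductions apply when $c_i=\pm1$ for some $i\le3$; there the face contains points of both types.

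\emph{Case $c=0$.} This is the main obstacle: we have $\sum d(z)z=0$ over the integers together with $\sum d=m$, with $m$ odd. The approach is a parity argument. Reduce modulo $2$. Each point of $V_4$ has an even number of nonzero coordinates except $(\pm1,\pm1,\pm1,0)$, which has three. Moreover, $\sum d(z)z=0$ forces $\sum d(z)\,|z|_1$ to be even, because each coordinate sum is zero. We also use the mod-$3$ cap structure: $V_4 \bmod 3$ is a $20$-cap, and since $m$ may not be divisible by $3$ this must be combined with the integer equation $\sum d(z)z=0$. With $D_k$ denoting the total multiplicity of points of weight $k$, we get $\sum d=D_2+D_3=m$ odd and $2D_2+3D_3$ even, so $D_3$ is even and $D_2$ is odd.

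Next, write $d(z)z$ summed per coordinate. Antipodal pairs $z,-z$ can be cancelled, so we may assume WLOG that $d(z)\,d(-z)=0$; after cancellation the length drops by an even amount and stays odd and positive. We then need to show that no antipodally-free multiset of odd size has a vanishing integer sum. This is not automatic, e.g.\ $(1,1,1,0)+(-1,0,0,1)+(0,-1,0,-1)+(0,0,-1,0)$, but $(0,0,-1,0)\notin V_4$. The plan is to finish with a finite LP/parity analysis over the $10$ antipodal classes. Pairing the sum equation with suitable linear functionals, such as $z_4$ and $z_1+z_2+z_3\pm z_4$, and checking that the value set on $V_4$ stays within $\{-1,0,1\}$ on one side of each functional, forces every term onto a face, which reduces to the earlier case. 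If this fails in general, the fallback is the cited $m$-goodness of this set from \cite{EEGKR}, where $V_4$ appears as the $20$-set good for every odd $m$ (Corollary~\ref{cor:values}(3)).
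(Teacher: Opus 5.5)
There is a genuine gap: the case $c=0$, which you yourself identify as the main obstacle, is never settled. The route you sketch for it cannot work. $V_4$ is centrally symmetric and full-dimensional, so $0$ is an interior point of $\conv(V_4)$. No nonzero linear form attains its maximum there, so no face argument can push the support onto a face. Antipodal cancellation and the mod-$3$ cap structure do not help either, and falling back on a citation does not prove the lemma.

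The missing idea is a single parity invariant. The form $z_1+z_2+z_3$ is odd at every point of $V_4$: it equals $\pm1$ on $\pm e_i\pm e_4$, and $\pm1$ or $\pm3$ on $(\pm1,\pm1,\pm1,0)$. Hence
\[
m(c_1+c_2+c_3)=\sum_z d(z)(z_1+z_2+z_3)\equiv\sum_z d(z)=m\equiv1\pmod 2,
\]
so $c_1+c_2+c_3$ is odd. This rules out $c=0$ immediately. It also rules out every $c$ with an even number of nonzero entries among $c_1,c_2,c_3$, including your subcase $c=e_4$.

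You were one step from this. Your weight count gives $D_3$ even. When $c=0$, the fourth coordinate alone gives $D_2=\sum d(z)|z_4|\equiv\sum d(z)z_4=mc_4=0\pmod 2$, so $m=D_2+D_3$ would be even. Using $|z|_1$, which mixes in $|z_4|$, hid this.

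Your treatment of $c\ne0$ is also only sketched ("similar reductions"), and it runs into the same obstruction. With the invariant in hand, use the inequality $|z_i|+|z_j|+|z_4|\le2$ for $i\ne j\le3$. It holds on $V_4$, hence on its hull, and leaves only $c\in V_4$ and $c=\pm e_i$ with $i\le3$.

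\begin{itemize}
\item The points of $V_4$ are vertices: each is the unique maximiser of $\pm z_1\pm z_2\pm z_3$ or of $\pm z_i\pm z_4$. So $c\in V_4$ forces a single point repeated $m$ times, which is impossible.
\item For $c=e_1$, the average lies in the relative interior of the facet $z_1=1$, so faces stop helping again and an arithmetic argument is needed. On the six points $(1,0,0,\pm1)$, $(1,\pm1,\pm1,0)$, the fourth coordinate gives $d(1,0,0,1)=d(1,0,0,-1)$. The second and third coordinates give $d(1,1,1,0)=d(1,-1,-1,0)$ and $d(1,1,-1,0)=d(1,-1,1,0)$. So $m$ is even, a contradiction.
\end{itemize}

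This is exactly the paper's proof. Your integrality argument for $c$, and your reduction of the statement for $A=\{0,1\}\times V_4$ to that for $V_4$ via the product lemma, match the paper and are fine.
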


\begin{proof}
Let $\prod c_vv$, $0\le c_v\le m-1$, $\sum c_v=m$, be zero modulo $m$, so $\sum c_vv=my$ with $y\in\Z^4$ and $y\in\conv(V_4)$.
The form $z_1+z_2+z_3$ is odd on $V_4$, hence $m(y_1+y_2+y_3)\equiv\sum c_v\equiv m\pmod 2$, so $y_1+y_2+y_3$ is odd. The
lattice points of $\conv(V_4)$ with this property are $\pm e_i$ ($i\le3$) and the points of $V_4$ itself: indeed
$y\in\{-1,0,1\}^4$, since all points of $V_4$ are; and the inequality $|z_i|+|z_j|+|z_4|\le2$ ($i\ne j\le3$) holds on
$V_4$, hence on $\conv(V_4)$, so if $y_1,y_2,y_3$ are all nonzero then $y_4=0$ and $y\in V_4$, while if exactly one of
them is nonzero then $y=\pm e_i$ or $y=\pm e_i\pm e_4$. Every point of $V_4$ is the unique maximiser of a linear form
($\pm z_1\pm z_2\pm z_3$ or $\pm z_i\pm z_4$), hence a vertex; so $y\in V_4$ forces $c_y=m$. For $y=e_1$ (the other cases follow by symmetry) the support lies on the
facet $z_1=1$, whose points are $(1,0,0,\pm1)$ and $(1,\pm1,\pm1,0)$; the fourth coordinate forces equal multiplicities on
$(1,0,0,\pm1)$, and the second and third coordinates force $c_{(1,1,1,0)}=c_{(1,-1,-1,0)}$ and
$c_{(1,1,-1,0)}=c_{(1,-1,1,0)}$, so $m=\sum c_v$ is even, a contradiction. The second statement follows from
Lemma~\ref{lem:prodgood}, since $\{0,1\}$ is $m$-good.
\end{proof}

Table~\ref{tab:40} lists the $40$ points $a_i\in A$ (first coordinate $\varepsilon\in\{0,1\}$, then $b\in V_4$) with
labels $\pi_i\in\F_2^5$. The $32$ points marked H have pairwise distinct labels; the $8$ points marked L are
$(1,b)$ with $b=(\pm1,\pm1,\pm1,0)$, and each has the same label as $(0,b)$. For $n=2m$ let $g_i\in C_n^5$ be the
element with $g_i\equiv a_i\pmod m$ and $g_i\equiv\pi_i\pmod 2$, and let
\[
S_5(n)=\prod_{i\in\mathrm H}g_i^{\,n-1}\prod_{i\in\mathrm L}g_i^{\,m-1},\qquad |S_5(n)|=32(n-1)+8(m-1).
\]

\begin{table}[h]
\begin{center}\scriptsize\begin{tabular}{rccc|rccc}
\toprule
$i$ & $a_i$ & $\pi_i$ & & $i$ & $a_i$ & $\pi_i$ & \\
\midrule
1 & $(0,0,0,1,1)$ & $(0,0,0,0,0)$ & H & 21 & $(1,0,0,1,1)$ & $(0,1,1,1,0)$ & H\\
2 & $(0,0,0,1,-1)$ & $(0,0,0,1,0)$ & H & 22 & $(1,0,0,1,-1)$ & $(0,1,1,0,0)$ & H\\
3 & $(0,0,0,-1,1)$ & $(1,0,1,1,1)$ & H & 23 & $(1,0,0,-1,1)$ & $(1,1,0,0,1)$ & H\\
4 & $(0,0,0,-1,-1)$ & $(1,1,0,1,1)$ & H & 24 & $(1,0,0,-1,-1)$ & $(1,0,1,0,1)$ & H\\
5 & $(0,0,1,0,1)$ & $(0,1,0,1,1)$ & H & 25 & $(1,0,1,0,1)$ & $(0,0,1,0,1)$ & H\\
6 & $(0,0,1,0,-1)$ & $(0,1,0,0,1)$ & H & 26 & $(1,0,1,0,-1)$ & $(0,0,1,1,1)$ & H\\
7 & $(0,0,-1,0,1)$ & $(1,1,1,0,0)$ & H & 27 & $(1,0,-1,0,1)$ & $(1,1,1,1,0)$ & H\\
8 & $(0,0,-1,0,-1)$ & $(1,0,0,0,0)$ & H & 28 & $(1,0,-1,0,-1)$ & $(1,0,0,1,0)$ & H\\
9 & $(0,1,0,0,1)$ & $(0,1,0,1,0)$ & H & 29 & $(1,1,0,0,1)$ & $(0,0,1,1,0)$ & H\\
10 & $(0,1,0,0,-1)$ & $(0,0,1,0,0)$ & H & 30 & $(1,1,0,0,-1)$ & $(0,1,0,0,0)$ & H\\
11 & $(0,1,1,1,0)$ & $(1,0,1,0,0)$ & H & 31 & $(1,1,1,1,0)$ & $(1,0,1,0,0)$ & L\\
12 & $(0,1,1,-1,0)$ & $(0,1,1,0,1)$ & H & 32 & $(1,1,1,-1,0)$ & $(0,1,1,0,1)$ & L\\
13 & $(0,1,-1,1,0)$ & $(0,0,0,1,1)$ & H & 33 & $(1,1,-1,1,0)$ & $(0,0,0,1,1)$ & L\\
14 & $(0,1,-1,-1,0)$ & $(1,1,0,1,0)$ & H & 34 & $(1,1,-1,-1,0)$ & $(1,1,0,1,0)$ & L\\
15 & $(0,-1,0,0,1)$ & $(1,0,0,0,1)$ & H & 35 & $(1,-1,0,0,1)$ & $(1,0,0,1,1)$ & H\\
16 & $(0,-1,0,0,-1)$ & $(1,1,1,1,1)$ & H & 36 & $(1,-1,0,0,-1)$ & $(1,1,1,0,1)$ & H\\
17 & $(0,-1,1,1,0)$ & $(0,0,0,0,1)$ & H & 37 & $(1,-1,1,1,0)$ & $(0,0,0,0,1)$ & L\\
18 & $(0,-1,1,-1,0)$ & $(1,1,0,0,0)$ & H & 38 & $(1,-1,1,-1,0)$ & $(1,1,0,0,0)$ & L\\
19 & $(0,-1,-1,1,0)$ & $(1,0,1,1,0)$ & H & 39 & $(1,-1,-1,1,0)$ & $(1,0,1,1,0)$ & L\\
20 & $(0,-1,-1,-1,0)$ & $(0,1,1,1,1)$ & H & 40 & $(1,-1,-1,-1,0)$ & $(0,1,1,1,1)$ & L\\
\bottomrule
\end{tabular}\end{center}

\caption{The rank-five block: points $a_i=(\varepsilon,b)\in\{0,1\}\times V_4$, labels $\pi_i\in\F_2^5$ and roles.}
\label{tab:40}
\end{table}

\begin{theorem}\label{thm:rank5}
For every odd $m\ge3$ the sequence $S_5(2m)$ is glued and has no zero-sum subsequence of length $2m$.
\end{theorem}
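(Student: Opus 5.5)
The proof has two parts. First we check that $S_5(2m)$ is glued. Then we use Lemma~\ref{lem:red} to reduce the absence of $2m$-term zero-sums to a finite combinatorial statement that does not depend on $m$.

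\emph{Gluedness.} The $32$ heavy labels in Table~\ref{tab:40} are pairwise distinct; this is a direct check of the table. Next, $\rho$ is the map $g_i\mapsto a_i \bmod m$. The $40$ points $a_i$ are distinct integer vectors with entries in $\{-1,0,1\}$, and $m\ge3$, so $\rho$ is injective. Finally, $m$-goodness of $\rho(H\cup L)=A=\{0,1\}\times V_4$ is exactly Lemma~\ref{lem:V4}. Hence $S_5(2m)$ is glued.

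\emph{Reduction to relations.} By Lemma~\ref{lem:red} it suffices to rule out a relation: multiplicities $0\le d_i\le m-1$ with $\sum d_i=2m$, $\sum d_ia_i\equiv0\pmod m$ and $\sum_{d_i\text{ odd}}\pi_i=0$. I would argue as in Lemma~\ref{lem:V4} and pass to integers.

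Write $\sum d_ia_i=my$ with $y\in\Z^5$. Then $y/2$ is the average of the $a_i$, so $y/2\in\conv(A)$. Equivalently, $y\in 2\conv(A)=\conv(\{0,2\}\times 2\conv(V_4))$ lies in the doubled polytope, with first coordinate $y_0\in\{0,1,2\}$.

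Parity gives more information. The form $z_1+z_2+z_3$ is odd on $V_4$, so $m(y_1+y_2+y_3)\equiv\sum d_i=2m$, which forces $y_1+y_2+y_3$ to be even. This leaves a finite list of candidate lattice points $y$, independent of $m$. For each candidate $y$, the support of the relation is confined by the facet inequalities of $2\conv(A)$ that are tight at $y$. This produces a small family of candidate face supports, exactly as in the proof of Lemma~\ref{lem:V4}.

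On each face, the coordinate equations $\sum d_ia_i=my$ become linear equations over $\Z$ in the $d_i$, together with $\sum d_i=2m$ and $0\le d_i\le m-1$. I would solve them symbolically in $m$. Typically they force pairs of equal multiplicities, or express the $d_i$ as $m$ times rationals plus free parameters. The parity vector $(d_i\bmod 2)$ is then determined up to a few free bits, depending on whether $m$ is odd.

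The label condition $\sum_{d_i\text{ odd}}\pi_i=0$ in $\F_2^5$ must then be checked against these parity patterns. The labels were chosen so that no admissible odd-support set sums to $0$. The certificates for this step are the case list promised for the appendix, and they are independent of $m$.

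Points $y$ in the interior of a face, where the support is not forced down further, need separate treatment. There I would split by the value of $y_0$. For $y_0\in\{0,2\}$ all terms have $\varepsilon$ constant, so the relation lives in $\{0\}\times V_4$ or $\{1\}\times V_4$ with total $2m$. One can then reuse the argument of Lemma~\ref{lem:V4} at level $2m$: $y\in2\conv(V_4)$ with $y_1+y_2+y_3$ even. For $y_0=1$ exactly $m$ terms have each value of $\varepsilon$, so each half is an $m$-term sum with the same $V_4$-sum, up to the parts $my'$ and $my''$.

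\emph{Main obstacle.} The hard step is enumerating the lattice points of $2\conv(V_4)$ (and the mixed case $y_0=1$) together with their supports. There is one point where the analysis is not a single face: $y=0$, or more generally a $y$ deep inside. There the support can be large, and the multiplicity equations leave many degrees of freedom, so the parity patterns of the $d_i$ must be controlled through the linear constraints mod $2$. I would first try to show that these constraints force all odd-multiplicity indices to come in pairs $\pm$-symmetric sets, and then verify via the label table that their label sums never vanish. Failing that, I would fall back on an exhaustive but $m$-independent enumeration of admissible parity vectors, recorded as certificates.
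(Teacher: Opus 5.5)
Your check that $S_5(2m)$ is glued and your reduction to relations via Lemma~\ref{lem:red} are fine. After that, however, the proposal is a plan, and it stalls exactly where the work lies.

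\textbf{What fails.} The only constraint you derive, $y_1+y_2+y_3$ even, does not use the labels. It admits points such as $y=0$ and $y=(1,0,0,0,0)$. For these, $y/2$ lies in the relative interior of the face $\varepsilon=0$, respectively of $\conv(A)$ itself, so no face inequality restricts the support.

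For such $y$, reading off the parity pattern of $d$ and testing the label condition has no mechanism behind it. The mod-$2$ reductions of $\sum d_ia_i=my$ and $\sum d_i=2m$ cannot decide these cases: for $y=0$ the pattern $d\equiv0\pmod 2$ passes all of them and passes the label test trivially. Your suggested structure (odd indices coming in $\pm$-pairs) is unsupported.

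``Reusing Lemma~\ref{lem:V4} at level $2m$'' also fails. That argument needed $y_1+y_2+y_3$ odd to push $y$ to the boundary. On $\{0\}\times V_4$ there really are $2m$-term sums $\equiv0\pmod m$ with all multiplicities at most $m-1$, e.g.\ $b^{m-1}(-b)^{m-1}b'(-b')$, and only the labels exclude them. Deferring to ``certificates'' does not supply this argument.

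\textbf{The missing idea.} Couple the labels with the exact integer equation modulo $4$, as in Lemma~\ref{lem:tools}(i). If $u\cdot a_i+w\equiv2\lambda\cdot\pi_i\pmod4$ for all $i$ in the support, then multiplying by $d_i$ and summing gives $m(u\cdot y+2w)\equiv2\lambda\cdot\sum d_i\pi_i\equiv0\pmod4$.

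Table~\ref{tab:40} satisfies four such congruences on all $40$ points: $b_3+1\equiv\pi_1+\pi_2+\pi_3$, $b_2\equiv\pi_1+\pi_5$ and $b_1\equiv\pi_2+\pi_3+\pi_5\pmod2$, and $b_1+b_2+b_3+3\equiv2\pi_1\pmod4$. These force $y_1,y_2,y_3$ to be even with $y_1+y_2+y_3\equiv2\pmod4$. Hence some $y_i=\pm2$ with $i\le3$, so $y/2$ always lies on a proper face $\pm b_i=1$.

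Consequently the interior cases you identify as the main obstacle never occur; in particular $y=0$ and $y=(1,0,0,0,0)$ are excluded outright. The remaining $210$ vectors are handled by successive faces, followed by one of three arguments:
\begin{itemize}
\item separation, which uses $d_i\le m-1$, i.e.\ $\lambda_i<\frac12$;
\item small support;
\item a mod-$4$ congruence of the same type, valid on the final face.
\end{itemize}
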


The sequence is glued by Lemma~\ref{lem:V4}. By Lemma~\ref{lem:red} it suffices to exclude relations. Let $d$ be a
relation: $0\le d_i\le m-1$, $\sum d_i=2m$, $\sum d_ia_i=my$ with $y\in\Z^5$, and $\sum d_i\pi_i\equiv0\pmod 2$. Then
$y/2=\sum\lambda_ia_i$ with $\lambda_i=d_i/(2m)\in[0,\tfrac12)$ and $\sum\lambda_i=1$; in particular
$y\in\{0,1,2\}\times\{-2,\dots,2\}^4$. We use four elementary tools.

\begin{lemma}\label{lem:tools}
Let $d$ be a relation and $y$ as above.
\begin{enumerate}[label=\textup{(\roman*)}]
\item \emph{(Congruences.)} If $u\in\Z^5$, $w\in\Z$ and $\lambda\in\{0,1\}^5$ satisfy
$u\cdot a_i+w\equiv2\,\lambda\cdot\pi_i\pmod 4$ for all $i$ with $d_i>0$, then $u\cdot y+2w\equiv0\pmod4$.
\item \emph{(Faces.)} If $\varphi\in\Z^5$ and $\max_{i\in F}\varphi(a_i)=\varphi(y)/2$, where $F\supseteq\supp(d)$, then
$\supp(d)\subseteq\{i\in F:\varphi(a_i)=\varphi(y)/2\}$.
\item \emph{(Separation.)} If $\varphi(y)>\varphi_1+\varphi_2$, where $\varphi_1\ge\varphi_2$ are the two largest
values of $\varphi$ on $\{a_i: i\in F\}$ and $F\supseteq\supp(d)$, then there is no relation.
\item \emph{(Small support.)} $|\supp(d)|\ge3$.
\end{enumerate}
\end{lemma}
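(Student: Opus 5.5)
The four statements are elementary. Each comes from one of two facts. The first is the mod-$2$ label condition, combined with $m$ being odd. The second is the convex-combination description $y/2=\sum\lambda_ia_i$ with $\lambda_i=d_i/(2m)\in[0,\tfrac12)$ and $\sum\lambda_i=1$. I would prove them in the order stated, fixing a relation $d$ and $y\in\Z^5$ with $\sum d_ia_i=my$ (computed in $\Z^5$, the $a_i$ being integer vectors).

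For (i), multiply the hypothesis $u\cdot a_i+w\equiv2\lambda\cdot\pi_i\pmod4$ by $d_i$ and sum over $\supp(d)$. The left side becomes $u\cdot\sum d_ia_i+w\sum d_i=m\,u\cdot y+2mw$. The right side becomes $2\lambda\cdot\sum d_i\pi_i$. Here $\sum d_i\pi_i$ is computed over $\Z$, and its reduction modulo $2$ equals $\sum_{d_i\text{ odd}}\pi_i=0$ by the definition of a relation. Hence $2\lambda\cdot\sum d_i\pi_i\equiv0\pmod4$, which gives $m(u\cdot y+2w)\equiv0\pmod4$. Since $m$ is odd, it is invertible modulo $4$, and the claim follows.

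For (ii), apply $\varphi$ to $y/2=\sum\lambda_ia_i$. Write $M=\max_{i\in F}\varphi(a_i)$, so $\varphi(y)/2=M$ by hypothesis. Then
\[
M=\sum_{i\in\supp(d)}\lambda_i\varphi(a_i)\le\sum_i\lambda_iM=M,
\]
because the $\lambda_i$ are nonnegative, sum to $1$, and vanish outside $\supp(d)\subseteq F$. Equality forces $\varphi(a_i)=M$ for every $i$ with $\lambda_i>0$, that is, for every $i\in\supp(d)$.

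For (iii), the key point is the strict bound $\lambda_i<\tfrac12$. Order the indices of $F$ by decreasing value of $\varphi(a_i)$, so the two largest values are $\varphi_1\ge\varphi_2$, counted over distinct indices. Among weight vectors with entries in $[0,\tfrac12]$ summing to $1$, a linear functional $\sum\lambda_i\varphi(a_i)$ is maximised by putting weight $\tfrac12$ on each of the top two indices. Concretely, $\sum\lambda_i(\varphi(a_i)-\varphi_2)$ has only one possibly positive term, at most $\tfrac12(\varphi_1-\varphi_2)$. Therefore
\[
\varphi(y)/2=\sum\lambda_i\varphi(a_i)\le\tfrac12(\varphi_1+\varphi_2),
\]
which contradicts the hypothesis $\varphi(y)>\varphi_1+\varphi_2$. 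So no relation exists.

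For (iv), if $|\supp(d)|\le2$ then $\sum d_i\le2(m-1)<2m$, which contradicts $\sum d_i=2m$.

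None of the steps is a real obstacle. The only points needing care are two. In (i), one must reduce $\sum d_i\pi_i$ correctly, using that it is even coordinatewise; in (iii), one must read "two largest values" over distinct indices, so that a single maximiser cannot carry more than half the weight.
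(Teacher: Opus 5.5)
Your proof is correct and follows the paper's argument essentially line by line. For (i) you sum the weighted congruence and use that $\sum d_i\pi_i$ is coordinatewise even and $m$ is odd; for (ii) and (iii) you use the convex combination $y/2=\sum\lambda_ia_i$ with $0\le\lambda_i<\tfrac12$ and $\sum\lambda_i=1$; for (iv) you use the count $\sum d_i\le2(m-1)<2m$. Your reading of $\varphi_1\ge\varphi_2$ as counted over distinct indices (with multiplicity) is the right one, and it is implicit in the paper's use of $\ge$.
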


\begin{proof}
(i) Multiply by $d_i$ and sum: $m(u\cdot y)+2mw\equiv2\lambda\cdot\sum d_i\pi_i\equiv0\pmod 4$, and $m$ is odd. (ii)
$\sum\lambda_i(\varphi(y)/2-\varphi(a_i))=0$ with nonnegative summands. (iii) Since $0\le\lambda_i<\frac12$ and
$\sum\lambda_i=1$, $\varphi(y)/2=\sum\lambda_i\varphi(a_i)\le(\varphi_1+\varphi_2)/2$. (iv) $\sum d_i=2m$ and
$d_i\le m-1$.
\end{proof}

\begin{proof}[Proof of Theorem~\ref{thm:rank5}]
\emph{Global congruences.} All $40$ points satisfy
\[
b_3+1\equiv\pi_1+\pi_2+\pi_3,\qquad b_2\equiv\pi_1+\pi_5,\qquad b_1\equiv\pi_2+\pi_3+\pi_5\pmod 2,
\]
\[
b_1+b_2+b_3+3\equiv2\pi_1\pmod 4,
\]
where $a=(\varepsilon,b_1,b_2,b_3,b_4)$; these can be read off Table~\ref{tab:40}. By
Lemma~\ref{lem:tools}(i), $y=(y_0,y_1,\dots,y_4)$ has $y_1,y_2,y_3$ even and $y_1+y_2+y_3\equiv2\pmod 4$. This leaves the
set $\mathcal Y$ of $3\cdot14\cdot5=210$ vectors $y$ with $y_0\in\{0,1,2\}$, $y_4\in\{-2,\dots,2\}$, and
$(y_1,y_2,y_3)\in\{-2,0,2\}^3$ with an odd number of nonzero entries.

\emph{The $210$ vectors.} For each $y\in\mathcal Y$ we have a certificate of the following form. A list of linear
forms $\varphi$ is applied successively by Lemma~\ref{lem:tools}(ii), starting from $F$ = all $40$ points and replacing
$F$ by the face each time. Then one of the following applies: a separating form (Lemma~\ref{lem:tools}(iii),
$136$ vectors); a face with at most two points (Lemma~\ref{lem:tools}(iv), $20$ vectors); or a congruence
$u\cdot a_i+w\equiv2\lambda\cdot\pi_i\pmod 4$ valid on the final face $F$ and violated by $y$ in the sense of
Lemma~\ref{lem:tools}(i) ($54$ vectors). All forms have entries in $\{-2,\dots,2\}$, and at most three faces are needed.
None of these arguments depends on $m$. The $210$ certificates are listed in Table~\ref{tab:cert} in the
appendix; each one is a finite check from Table~\ref{tab:40}.
\end{proof}

\emph{A worked example.} Take $y=(0,-2,0,0,0)$. The form $\varphi=(-1,0,0,0,0)$ has maximum $0=\varphi(y)/2$ on
the $40$ points, so by Lemma~\ref{lem:tools}(ii) the support lies on the face $\varepsilon=0$. On it, $\varphi=(0,-1,0,0,0)$
has maximum $1=\varphi(y)/2$, so the support lies in $F=\{15,\dots,20\}$, the six points $(0,-1,b_2,b_3,b_4)$ of
Table~\ref{tab:40}. On $F$ the congruence $b_3+b_4+1\equiv2(\pi_4+\pi_5)\pmod4$ holds (both sides are $2,0,2,0,2,0$ for
$i=15,\dots,20$), so Lemma~\ref{lem:tools}(i) with $u=(0,0,0,1,1)$, $w=1$, $\lambda=(0,0,0,1,1)$ gives
$y_3+y_4+2\equiv0\pmod4$; but $y_3+y_4+2=2$. So there is no relation with this $y$. In Table~\ref{tab:cert} this is the
row ``$0\bar2000$; $\bar10000,0\bar1000$; $\mathrm C\ 00011|1|00011$''.

The certificates were found by computer and checked independently; a machine-readable version of Table~\ref{tab:cert}
is available from the author. For $n=6,10,14,18,22$ we also confirmed directly that $S_5(n)$ has no $n$-term zero-sum.

\begin{proof}[Proof of Theorem~\ref{thm:main2}]
(a) Theorem~\ref{thm:rank5} and Lemma~\ref{lem:eta}. (b) By Theorem~\ref{thm:rank5} and Theorem~\ref{thm:gluedprod}
(applied repeatedly; the product of glued sequences is glued), the product of $\lfloor r/5\rfloor$ copies of $S_5(n)$
has no $n$-term zero-sum; its light density is $\lfloor r/5\rfloor/4$. By \eqref{eq:prodlength} and doubling its length
over $C_n^r$ is $2^r(n-1)+(m-1)2^{r-2}\lfloor r/5\rfloor$. (c) In the same way, with $a$ copies of $S_5(n)$ and $b$
copies of $S_{WZ}(n)$, which is glued by Lemma~\ref{lem:WZglued}; the light density is $a/4+b/8$. In all cases
Lemma~\ref{lem:eta} gives the bound for $\eta$.
\end{proof}

\begin{remark}\label{rem:n6}
For $n=6$ the first version of this paper gave explicit sequences of lengths $84$, $176$ and $352$ in ranks $4$, $5$
and $6$, found by a computer search on complete caps of sizes $18$, $40$ and $80$; Theorem~\ref{thm:main2} now gives the same
lengths for all $n\equiv2\pmod4$ by a uniform proof. A maximum cap cannot be used as a support: for $m=3$, in a glued sequence whose $2^r$ heavy labels exhaust $\F_2^r$, each
light term must form a pair with a heavy term whose sum modulo $3$ is not the sum of any other pair, and the
$20$-cap in $AG(4,3)$ and Edel's $42$-point and $96$-point sets \cite{Edel04b} contain no such pair.
\end{remark}

\section{Zero is the last \texorpdfstring{$n$}{n}-term sum}\label{sec:last}

For a sequence $S$ over $G$ with $\exp(G)=n$ let $\Sigma_n(S)$ be the set of sums of subsequences of length $n$. If $S$
has no zero-sum subsequence of length $n$, then $0\notin\Sigma_n(S)$, so $|\Sigma_n(S)|\le|G|-1$.

\begin{conjecture}\label{conj:last}
Let $G=C_n^r$, and let $S$ be a sequence over $G$ of length $\s(G)-1$ without zero-sum subsequence of length $n$. Then
$\Sigma_n(S)=G\setminus\{0\}$; that is, the $n$-term subsequence sums of $S$ cover all $|G|-1$ nonzero elements.
\end{conjecture}

This concerns concrete extremal sequences and is much weaker than a full inverse result. The set $\Sigma_n(S)$ is
invariant under translations of $S$ (which change all $n$-term sums by $ng=0$) and is mapped bijectively by
automorphisms of $G$. Call a sequence $S$ without $n$-term zero-sum \emph{maximal} (that is, inclusion-maximal, not
necessarily of maximum length) if $Sx$ has an $n$-term zero-sum for every $x\in G$; equivalently, $\Sigma_{n-1}(S)=G$ (the zero-sum of $Sx$ must contain $x$). Every sequence of length
$\s(G)-1$ without $n$-term zero-sum is maximal. The conjecture holds as soon as one term has the largest possible
multiplicity.

\begin{proposition}\label{prop:lastmax}
Let $\exp(G)=n$ and let $S$ be a maximal sequence over $G$ without $n$-term zero-sum. If some term $t$ of $S$ has
multiplicity $n-1$, then $\Sigma_n(S)=G\setminus\{0\}$.
\end{proposition}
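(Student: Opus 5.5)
The plan is to use the heavy term $t$ to convert $(n-1)$-term sums into $n$-term sums. Maximality gives $\Sigma_{n-1}(S)=G$. The inclusion $\Sigma_n(S)\subseteq G\setminus\{0\}$ is automatic, since $S$ has no $n$-term zero-sum. So fix $g\ne0$; we must write $g$ as the sum of an $n$-term subsequence of $S$.

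First normalise by translation, which is allowed because $\Sigma_n$ and the absence of $n$-term zero-sums are translation invariant. Replace $S$ by $S-t$, so that we may assume $t=0$ and $0$ occurs exactly $n-1$ times. Now take any subsequence $U$ of $S$ of length $n-1$ and let $k$ be the number of zeros in $U$. If $k\le n-2$, then $U0$ is an $n$-term subsequence with the same sum. Hence every element of $\Sigma_{n-1}(S)$ that is realised by some $(n-1)$-term subsequence not consisting of all the zeros lies in $\Sigma_n(S)$.

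It remains to handle $(n-1)$-term subsequences made up only of zeros. Such a $U$ equals $0^{n-1}$ and has sum $0$. So the only element of $G$ whose representation in $\Sigma_{n-1}(S)$ might be forced to be $0^{n-1}$ is $0$ itself. For $g\ne0$, maximality gives an $(n-1)$-term subsequence $U$ with $\sigma(U)=g$, and $U\ne0^{n-1}$ because its sum is nonzero. Therefore $U$ contains at most $n-2$ zeros, and $U0$ is an $n$-term subsequence of $S$ with sum $g$. This gives $G\setminus\{0\}\subseteq\Sigma_n(S)$.

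The one point needing care is the maximality equivalence $\Sigma_{n-1}(S)=G$, which the paper records. To justify it: for each $x$, the sequence $S(-x)$ has an $n$-term zero-sum. That zero-sum must use the new term $-x$, since $S$ itself has none, and deleting $-x$ leaves an $(n-1)$-term subsequence of $S$ with sum $x$. The hypothesis that $t$ has multiplicity exactly $n-1$, rather than merely at least $n-1$, is only needed to ensure that the zeros are exhausted precisely when $U=0^{n-1}$. Multiplicity $n$ or more is impossible anyway, since $t^n$ would be an $n$-term zero-sum. There is no real obstacle beyond this bookkeeping.
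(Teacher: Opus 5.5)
Your proof is correct and is the paper's argument. Translating so that $t=0$ and picking an $(n-1)$-term subsequence with sum $g$ is the same as the paper's direct choice of an $(n-1)$-term subsequence with sum $g-t$; in both versions $U\ne t^{n-1}$ because $g\ne0$, so $Ut$ is an $n$-term subsequence with sum $g$.
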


\begin{proof}
Let $g\ne0$. Since $\Sigma_{n-1}(S)=G$ there is a subsequence $U$ of $S$ of length $n-1$ with sum $g-t$. If
$U=t^{n-1}$, then $g-t=(n-1)t=-t$, so $g=0$, which is excluded. Hence $U$ contains at most $n-2$ copies of $t$, and
$Ut$ is a subsequence of $S$ of length $n$ with sum $g$.
\end{proof}

\begin{corollary}\label{cor:last}
Conjecture~\ref{conj:last} holds for every extremal sequence that contains a term of multiplicity $n-1$. In
particular it holds for all extremal sequences of the form $T^{n-1}$, hence for all groups $C_n^r$ with Property D,
for $r=1$ and for $n\in\{2,3\}$, and for all the sequences of this paper and of \cite{WZ} whenever they are extremal.
\end{corollary}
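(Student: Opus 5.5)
The corollary follows directly from Proposition~\ref{prop:lastmax}. Every sequence of length $\s(G)-1$ without $n$-term zero-sum is maximal: if $Sx$ had no $n$-term zero-sum for some $x$, then $Sx$ would have length $\s(G)$ and no $n$-term zero-sum, contradicting the definition of $\s(G)$. So for an extremal sequence $S$ containing a term of multiplicity $n-1$, the proposition applies and gives $\Sigma_n(S)=G\setminus\{0\}$. This proves the first sentence. It remains to check, for each listed case, that every relevant extremal sequence has a term of multiplicity $n-1$.

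\textbf{Sequences of the form $T^{n-1}$.} If $T$ is nonempty, every term has multiplicity $n-1$. Since $\s(G)-1\ge n-1>0$, an extremal $T^{n-1}$ has $T\ne\emptyset$. For groups with Property~D, Definition~\ref{def:CD} says every extremal sequence has this form, so the conjecture holds there.

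\textbf{The case $r=1$.} By Erd\H{o}s--Ginzburg--Ziv, $\s(C_n)-1=2n-2$. I will invoke the classical inverse result that a sequence of length $2n-2$ over $C_n$ without $n$-term zero-sum has the form $a^{n-1}b^{n-1}$ with $a\ne b$. This is Property~D for $C_n$, and I would cite it rather than reprove it.

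\textbf{The cases $n=2$ and $n=3$.} For $n=2$ the condition is immediate. A sequence over $C_2^r$ without $2$-term zero-sum has no repeated term, so every term has multiplicity $1=n-1$. For $n=3$ I would argue that every term has multiplicity at most $2$, since $x^3$ is a zero-sum. An extremal sequence with all multiplicities $1$ would be a set containing no three collinear points, of size $\s(C_3^r)-1$. I expect this case to be the main obstacle. The cleanest route is the known fact that $C_3^r$ has Property~D, via $\s(C_3^r)=2a_3(r)+1$. If that is not available, I would need to show that an extremal sequence cannot be a set. For this I would compare $\s(C_3^r)-1\ge 2a_3(r)$ from Lemma~\ref{lem:basic}(2) with the cap bound $a_3(r)$ on sets. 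Since $a_3(r)<2a_3(r)$, a set is never long enough to be extremal, so some term has multiplicity $2=n-1$. This comparison argument actually settles $n=3$ directly.

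\textbf{Sequences of this paper and of \cite{WZ}.} By construction, every sequence built in this paper and in \cite{WZ} contains heavy terms of multiplicity $n-1$: the products, the doublings and $S_5(n)$ all do. So whenever such a sequence is extremal, the proposition applies.
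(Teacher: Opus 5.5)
Your proof is correct and follows the paper's route: an extremal sequence is maximal, so Proposition~\ref{prop:lastmax} applies as soon as some term has multiplicity $n-1$, and you check that each listed case supplies such a term. The only difference is for $n=3$, where the paper's wording points to Property~D for $C_3^r$. You instead use the direct comparison that a cap has at most $a_3(r)<2a_3(r)\le\s(C_3^r)-1$ points, so an extremal sequence cannot be a set; this is a valid, self-contained shortcut.
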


Thus a sufficient condition for Conjecture~\ref{conj:last} is the following weak form of Property D, which is open
for all groups $C_n^r$ with $r\ge2$ where Property D is not known: \emph{every sequence of length $\s(C_n^r)-1$ without $n$-term
zero-sum contains a term of multiplicity $n-1$}. For even $n$ this is exactly the hypothesis of Lemma~\ref{lem:eta},
and all sequences of Theorems~\ref{thm:main} and \ref{thm:main2} satisfy it. We add one observation on the
remaining case.

\begin{lemma}\label{lem:lastrepl}
Let $S$ be a sequence of length $\s(G)-1$ without $n$-term zero-sum, and let $g\ne0$ with $g\notin\Sigma_n(S)$. Then
for every term $t$ of $S$ the sequence $St^{-1}(t-g)$, obtained by replacing one copy of $t$ by $t-g$, is again a
sequence of length $\s(G)-1$ without $n$-term zero-sum.
\end{lemma}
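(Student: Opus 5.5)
The plan is to argue by contradiction, checking the one thing that could fail: the new sequence $S'=St^{-1}(t-g)$ has the same length as $S$, namely $\s(G)-1$. So it suffices to show that $S'$ has no zero-sum subsequence of length $n$.

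Suppose $W'$ is an $n$-term zero-sum subsequence of $S'$. Split according to whether $W'$ uses the new term $t-g$.
\begin{enumerate}[label=\textup{(\alph*)}]
\item If $W'$ does not use $t-g$, then $W'$ divides $St^{-1}$, hence divides $S$. This contradicts the hypothesis on $S$.
\item If $W'$ uses $t-g$, write $W'=U(t-g)$ with $U$ a subsequence of $St^{-1}$ of length $n-1$. Then $\sigma(U)+t-g=0$, so $\sigma(Ut)=g$. Since $U$ divides $St^{-1}$, the sequence $Ut$ divides $S$ and has length $n$. Thus $g\in\Sigma_n(S)$, contrary to assumption.
\end{enumerate}

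One point needs care in case (b). If $t-g$ happens to coincide with an element already in $S$, then $W'$ may use copies of that element. We only need to remove one copy equal to $t-g$ from $W'$: the remaining $U$ has multiplicities bounded by those in $S'(t-g)^{-1}=St^{-1}$. So $U$ divides $St^{-1}$, and the argument goes through.

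Both cases are contradictions, so $S'$ has no $n$-term zero-sum. Since its length is $\s(G)-1$, the lemma follows. I do not expect any step to be a real obstacle; the only subtle point is the multiplicity bookkeeping in case (b) just described.
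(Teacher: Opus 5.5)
Your proof is correct and is essentially the paper's argument. An $n$-term zero-sum of $St^{-1}(t-g)$ that avoids $t-g$ divides $S$, and otherwise it is $U(t-g)$ with $U\mid St^{-1}$, so $Ut$ is an $n$-term subsequence of $S$ with sum $g$. Your multiplicity bookkeeping ($U\mid St^{-1}$, hence $Ut\mid S$) is exactly the paper's remark that $U$ uses at most $v_t(S)-1$ copies of $t$.
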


\begin{proof}
An $n$-term zero-sum of $St^{-1}(t-g)$ must contain $t-g$, so it is $U(t-g)$ with $U$ a subsequence of $St^{-1}$ of
length $n-1$ and sum $g-t$. Then $Ut$ is a subsequence of $S$ (as $U$ uses at most $v_t(S)-1$ copies of $t$) of length
$n$ and sum $g$, contradicting $g\notin\Sigma_n(S)$.
\end{proof}

So a counterexample to the conjecture would come with a whole family of extremal sequences; a single extremal
sequence without a term of multiplicity $n-1$ would be a counterexample to the weak form of Property D above.

\emph{Numerical checks.} For the following sequences $S$ without $n$-term zero-sum we verified by computer that
$\Sigma_{n-1}(S)=G$, i.e.\ that $S$ is maximal (so that Proposition~\ref{prop:lastmax} applies), and also directly
that $\Sigma_n(S)=G\setminus\{0\}$.
\begin{itemize}
\item $T_0^{n-1}$ over $C_n^3$, with the $9$-point set $T_0$ of \cite{Els04}: $n=3,5,7,9,11,13,15,17,19,21,25$.
\item $V_4^{n-1}$ over $C_n^4$, with the $20$-point set $V_4$ of Section~\ref{sec:rank5}: $n=3,5,7,9,11,13,15$; the
$20$-point set of \cite{Els16} for $n=5,7,9,11,13$; and Edel's $20$-point set \cite{Edel04b} for $n=15$.
\item $E_5^{n-1}$ over $C_n^5$, with Edel's $42$-point set \cite{Edel04b}: $n=3,5,7,9,11,13,15$; and Edel's $96$-point set
over $C_n^6$: $n=3,5,7$.
\item $S_{WZ}(n)$ over $C_n^4$: $n=6,10,12,14,18,20$; and $S_5(n)$ over $C_n^5$: $n=6,10,14$.
\end{itemize}
For the glued sequences $S_{WZ}(n)$ and $S_5(n)$ the heavy part alone already covers all nonzero elements. The
sequences $T_0^{n-1}$ and $V_4^{n-1}$ are extremal whenever $\s(C_n^r)=c_r(n-1)+1$ is known, for instance for $n=3,9$,
and in rank $3$ also for $n=5,15$ (Section~\ref{sec:div});
in general extremality is not known, and maximality is what the computation establishes. The property fails for
short sequences: for five points of $V_4$ and $n=5$, only $78$ of the $625$ elements are covered.

\section{The role of computation}\label{sec:deps}

All theorems are lower bounds proved by explicit constructions. Computers were used to find the constructions and
the certificates. The proofs are of the following kinds.
\begin{itemize}
\item \emph{Analytic:} Lemmas~\ref{lem:basic}--\ref{lem:double}, \ref{lem:div}, \ref{lem:red}, \ref{lem:prodgood},
\ref{lem:WZglued}, \ref{lem:V4}, \ref{lem:tools}; Proposition~\ref{prop:equiv}, Proposition~\ref{prop:WZrigid};
Theorems~\ref{thm:rigidprod} and \ref{thm:gluedprod}; hence Theorem~\ref{thm:main} and Corollary~\ref{cor:growth};
Proposition~\ref{prop:lastmax}, Corollary~\ref{cor:last} and Lemma~\ref{lem:lastrepl}.
\item \emph{By an explicit case study:} Theorem~\ref{thm:rank5}, via the four congruences and the $210$ certificates of Table~\ref{tab:cert},
each of which is a finite check independent of $n$. Hence Theorem~\ref{thm:main2}.
\item \emph{Checks not used in proofs:} the direct verifications mentioned after
Proposition~\ref{prop:WZrigid} and Theorem~\ref{thm:rank5}.
\end{itemize}

\section{Concluding remarks}

\begin{enumerate}[label=(\arabic*),leftmargin=*]
\item \emph{Powers of two versus other even exponents.} For $n=2^a$, Harborth's bound is exact. For every other even
$n$ the excess $\s(C_n^r)-1-2^r(n-1)$ is at least of order $(m-1)\,r\,2^r$. The subgroup bound
$\s(C_{2m'}^r)\le m'2^r+\s(C_{m'}^r)$ (see \cite{GG} and {\cite[Section~4]{EEGKR}}) shows that, for $n=2m$, the excess is
at most $\s(C_m^r)-1-(m-1)2^r$. This upper bound is exponentially larger than $(m-1)2^r$: Edel's $96$-point set in
rank $6$, which is $m$-good for every odd $m$ \cite{Edel04b}, and Lemma~\ref{lem:prodgood} give
$\s(C_m^r)-1\ge(m-1)\,96^{\lfloor r/6\rfloor}2^{r-6\lfloor r/6\rfloor}\ge(m-1)\,c\cdot2.14^r$.
\item \emph{Small ranks.} Let $n=2q$ with $q$ an odd prime. If $\s(C_q^4)=20q-19$ and $\s(C_q^5)=42q-41$, as
conjectured, the subgroup bound gives $\s(C_{2q}^4)-1\le36q-20$ and $\s(C_{2q}^5)-1\le74q-42$, while Theorems~\ref{thm:WZ}
and \ref{thm:main2}(a) give $34q-18$ and $72q-40$. In both ranks the gap is $2(q-1)$, the length of two light terms. Since at most $16$ terms of a sequence
over $C_{2q}^4$ without $2q$-term zero-sum can have multiplicity at least $q$ (their residues modulo $2$ are distinct),
and all other terms have multiplicity at most $q-1$, closing the gap in rank $4$ requires a support of at least $20$
points; a $19$-point support could improve the bound by one light term only. The sequences of Wang and Zeng cannot be extended, and the $20$-point sets of \cite{EEGKR,Edel04b,Els16} admit no light
term at all for $m=3$ (Remark~\ref{rem:n6}) and, by a computation of all relations, also for $m=5$ and $m=7$. In rank $3$ the subgroup bound $17q-9$ exceeds Harborth's $16q-8$ by a single light term;
a search for $n=6$ and $n=10$ found no such sequence. In rank $5$ a $42$-point support would be needed; we were not able
to find one for $q=5$ or $q=7$ by modifying, doubling and twisting the $20$-point sets.
\item \emph{A rank-five block for $n\equiv0\pmod4$.} For $n\equiv0\pmod 4$ we only have the rank-four block of Wang and
Zeng. A rank-five block with light density $1/4$ would improve Theorem~\ref{thm:main} also for these $n$. The direct
analogue of $S_5(n)$, with labels $0/1$ in $C_4^5$, has zero-sums for $n=12$ and $n=20$.
\item \emph{Larger blocks.} In rank six, light density $1/4$ (as for $n=6$ with $352$ terms) is no better than products
of rank-five blocks. Better rates would need blocks of light density larger than $r/20$ in rank $r$.
\item \emph{Odd exponents.} For odd $n$ the parity obstruction is absent, and no counterexample to Properties C or D
is known. Proposition~\ref{prop:equiv} locates where one would have to look: if $M_n(r)$ is known, then Property~D for
$C_n^r$ forces $\s(C_n^r)-1=(n-1)M_n(r)$, so \emph{any} sequence of length $(n-1)M_n(r)$ without $n$-term zero-sum that
is not of the form $T^{n-1}$ disproves Property D, whatever the value of $\s(C_n^r)$. By Corollary~\ref{cor:values}(3),
$M_n(3)=9$ and $M_n(4)=20$ for all odd $n$ divisible by $3$. The natural candidates are sequences in which one heavy term
$h^{n-1}$ of $T^{n-1}$ is replaced by $h^{a}\ell^{\,n-1-a}$ with a new element $\ell$; for $T=T_0$ in rank $3$ and
$n=9,15,21,25$, and for the $20$-point sets $V_4$ and that of \cite{Els16} in rank $4$ and $n=9,15$, an exhaustive
search shows that no such replacement is possible. Simulated annealing over sequences of length $126$ over $C_{15}^3$ with other multiplicity
profiles (eight heavy terms and lights of multiplicities $4,4,4,2$; seven heavy terms and seven of multiplicity $4$) did not
find a sequence without $15$-term zero-sum either; this is evidence about these searches, not about Property D. In
rank $3$ the remaining question is therefore whether Property D holds for $C_{15}^3$. Since $\s(C_{15}^3)=127$ is known
(Section~\ref{sec:div}), this concerns the structure of the sequences of length $126$ rather than the value of the
constant.
\end{enumerate}

\appendix
\section{The certificates for Theorem~\ref{thm:rank5}}\label{app:cert}

Table~\ref{tab:cert} lists, for each of the $210$ vectors $y\in\mathcal Y$, the certificate used in the proof of
Theorem~\ref{thm:rank5}. Vectors are written as strings of their five coordinates, with $\bar k=-k$. The column
``faces'' gives the linear forms $\varphi$ applied successively by Lemma~\ref{lem:tools}(ii), starting from all $40$
points of Table~\ref{tab:40}. In the last column, $\mathrm S\ \varphi$ is a separating form
(Lemma~\ref{lem:tools}(iii)); $\mathrm P$ means that the final face has at most two points (Lemma~\ref{lem:tools}(iv));
and $\mathrm C\ u|w|\lambda$ is a congruence $u\cdot a_i+w\equiv2\lambda\cdot\pi_i\pmod4$ valid on the final face and
violated by $y$ (Lemma~\ref{lem:tools}(i)).

\begin{table}[p]
{\fontsize{5.4}{5.95}\selectfont\setlength{\tabcolsep}{0pt}\renewcommand{\arraystretch}{1}
\centering\begin{tabular}[t]{@{}l@{\hspace{5pt}}l@{\hspace{5pt}}l@{}}
$y$ & faces $\varphi$ & final\\ \hline
$0\bar{2}\bar{2}\bar{2}\bar{2}$ & $\bar{1}0000,\,0\bar{1}000,\,00\bar{1}00$ & $\mathrm S\ 000\bar{1}0$\\
$0\bar{2}\bar{2}\bar{2}\bar{1}$ & $\bar{1}0000,\,0\bar{1}000,\,00\bar{1}00$ & $\mathrm S\ 000\bar{1}0$\\
$0\bar{2}\bar{2}\bar{2}0$ & $\bar{1}0000,\,0\bar{1}000,\,00\bar{1}00$ & $\mathrm S\ 000\bar{1}0$\\
$0\bar{2}\bar{2}\bar{2}1$ & $\bar{1}0000,\,0\bar{1}000,\,00\bar{1}00$ & $\mathrm S\ 000\bar{1}0$\\
$0\bar{2}\bar{2}\bar{2}2$ & $\bar{1}0000,\,0\bar{1}000,\,00\bar{1}00$ & $\mathrm S\ 000\bar{1}0$\\
$0\bar{2}\bar{2}2\bar{2}$ & $\bar{1}0000,\,0\bar{1}000,\,00\bar{1}00$ & $\mathrm S\ 0000\bar{1}$\\
$0\bar{2}\bar{2}2\bar{1}$ & $\bar{1}0000,\,0\bar{1}000,\,00\bar{1}00$ & $\mathrm S\ 0000\bar{1}$\\
$0\bar{2}\bar{2}20$ & $\bar{1}0000,\,0\bar{1}000,\,00\bar{1}00$ & $\mathrm S\ 00010$\\
$0\bar{2}\bar{2}21$ & $\bar{1}0000,\,0\bar{1}000,\,00\bar{1}00$ & $\mathrm S\ 00001$\\
$0\bar{2}\bar{2}22$ & $\bar{1}0000,\,0\bar{1}000,\,00\bar{1}00$ & $\mathrm S\ 00001$\\
$0\bar{2}00\bar{2}$ & $\bar{1}0000,\,0\bar{1}000$ & $\mathrm S\ 0000\bar{1}$\\
$0\bar{2}00\bar{1}$ & $\bar{1}0000,\,0\bar{1}000$ & $\mathrm C\ 00002|0|11010$\\
$0\bar{2}000$ & $\bar{1}0000,\,0\bar{1}000$ & $\mathrm C\ 00011|1|00011$\\
$0\bar{2}001$ & $\bar{1}0000,\,0\bar{1}000$ & $\mathrm C\ 00002|0|11010$\\
$0\bar{2}002$ & $\bar{1}0000,\,0\bar{1}000$ & $\mathrm S\ 00001$\\
$0\bar{2}2\bar{2}\bar{2}$ & $\bar{1}0000,\,0\bar{1}000,\,000\bar{1}0$ & $\mathrm S\ 0000\bar{1}$\\
$0\bar{2}2\bar{2}\bar{1}$ & $\bar{1}0000,\,0\bar{1}000,\,000\bar{1}0$ & $\mathrm S\ 0000\bar{1}$\\
$0\bar{2}2\bar{2}0$ & $\bar{1}0000,\,0\bar{1}000,\,000\bar{1}0$ & $\mathrm S\ 00100$\\
$0\bar{2}2\bar{2}1$ & $\bar{1}0000,\,0\bar{1}000,\,000\bar{1}0$ & $\mathrm S\ 00001$\\
$0\bar{2}2\bar{2}2$ & $\bar{1}0000,\,0\bar{1}000,\,000\bar{1}0$ & $\mathrm S\ 00001$\\
$0\bar{2}22\bar{2}$ & $\bar{1}0000,\,0\bar{1}000$ & $\mathrm S\ 0000\bar{1}$\\
$0\bar{2}22\bar{1}$ & $\bar{1}0000,\,0\bar{1}000,\,00010$ & $\mathrm S\ 0000\bar{1}$\\
$0\bar{2}220$ & $\bar{1}0000,\,0\bar{1}000,\,00010$ & $\mathrm S\ 00100$\\
$0\bar{2}221$ & $\bar{1}0000,\,0\bar{1}000,\,00010$ & $\mathrm S\ 00001$\\
$0\bar{2}222$ & $\bar{1}0000,\,0\bar{1}000$ & $\mathrm S\ 00001$\\
$00\bar{2}0\bar{2}$ & $\bar{1}0000,\,00\bar{1}00$ & $\mathrm S\ 0000\bar{1}$\\
$00\bar{2}0\bar{1}$ & $\bar{1}0000,\,00\bar{1}00$ & $\mathrm C\ 00002|0|10011$\\
$00\bar{2}00$ & $\bar{1}0000,\,00\bar{1}00$ & $\mathrm C\ 00011|1|00101$\\
$00\bar{2}01$ & $\bar{1}0000,\,00\bar{1}00$ & $\mathrm C\ 00002|0|10011$\\
$00\bar{2}02$ & $\bar{1}0000,\,00\bar{1}00$ & $\mathrm S\ 00001$\\
$000\bar{2}\bar{2}$ & $\bar{1}0000,\,000\bar{1}0$ & $\mathrm S\ 0000\bar{1}$\\
$000\bar{2}\bar{1}$ & $\bar{1}0000,\,000\bar{1}0$ & $\mathrm C\ 00002|2|01101$\\
$000\bar{2}0$ & $\bar{1}0000,\,000\bar{1}0$ & $\mathrm C\ 00101|1|01010$\\
$000\bar{2}1$ & $\bar{1}0000,\,000\bar{1}0$ & $\mathrm C\ 00002|2|01101$\\
$000\bar{2}2$ & $\bar{1}0000,\,000\bar{1}0$ & $\mathrm S\ 00001$\\
$0002\bar{2}$ & $\bar{1}0000,\,0000\bar{1}$ & $\mathrm S\ 00010$\\
$0002\bar{1}$ & $\bar{1}0000,\,00010$ & $\mathrm C\ 00002|2|00101$\\
$00020$ & $\bar{1}0000,\,00010$ & $\mathrm C\ 00101|3|00010$\\
$00021$ & $\bar{1}0000,\,00010$ & $\mathrm C\ 00002|2|00101$\\
$00022$ & $\bar{1}0000,\,00001$ & $\mathrm S\ 00010$\\
$0020\bar{2}$ & $\bar{1}0000,\,0000\bar{1}$ & $\mathrm S\ 00100$\\
$0020\bar{1}$ & $\bar{1}0000,\,00100$ & $\mathrm C\ 00002|0|11100$\\
$00200$ & $\bar{1}0000,\,00100$ & $\mathrm C\ 00011|1|11011$\\
$00201$ & $\bar{1}0000,\,00100$ & $\mathrm C\ 00002|0|11100$\\
$00202$ & $\bar{1}0000,\,00001$ & $\mathrm S\ 00100$\\
$02\bar{2}\bar{2}\bar{2}$ & $\bar{1}0000,\,00\bar{1}00,\,000\bar{1}0$ & $\mathrm S\ 0000\bar{1}$\\
$02\bar{2}\bar{2}\bar{1}$ & $\bar{1}0000,\,00\bar{1}00,\,000\bar{1}0$ & $\mathrm S\ 0000\bar{1}$\\
$02\bar{2}\bar{2}0$ & $\bar{1}0000,\,00\bar{1}00,\,000\bar{1}0$ & $\mathrm S\ 01000$\\
$02\bar{2}\bar{2}1$ & $\bar{1}0000,\,00\bar{1}00,\,000\bar{1}0$ & $\mathrm S\ 00001$\\
$02\bar{2}\bar{2}2$ & $\bar{1}0000,\,00\bar{1}00,\,000\bar{1}0$ & $\mathrm S\ 00001$\\
$02\bar{2}2\bar{2}$ & $\bar{1}0000,\,00\bar{1}00$ & $\mathrm S\ 0000\bar{1}$\\
$02\bar{2}2\bar{1}$ & $\bar{1}0000,\,00\bar{1}00,\,00010$ & $\mathrm S\ 0000\bar{1}$\\
$02\bar{2}20$ & $\bar{1}0000,\,00\bar{1}00,\,00010$ & $\mathrm S\ 01000$\\
$02\bar{2}21$ & $\bar{1}0000,\,00\bar{1}00,\,00010$ & $\mathrm S\ 00001$\\
$02\bar{2}22$ & $\bar{1}0000,\,00\bar{1}00$ & $\mathrm S\ 00001$\\
$0200\bar{2}$ & $\bar{1}0000,\,0000\bar{1}$ & $\mathrm S\ 01000$\\
$0200\bar{1}$ & $\bar{1}0000,\,01000$ & $\mathrm C\ 00002|0|10111$\\
$02000$ & $\bar{1}0000,\,01000$ & $\mathrm C\ 00011|1|10010$\\
$02001$ & $\bar{1}0000,\,01000$ & $\mathrm C\ 00002|0|10111$\\
$02002$ & $\bar{1}0000,\,00001$ & $\mathrm S\ 01000$\\
$022\bar{2}\bar{2}$ & $\bar{1}0000,\,000\bar{1}0$ & $\mathrm S\ 0000\bar{1}$\\
$022\bar{2}\bar{1}$ & $\bar{1}0000,\,000\bar{1}0,\,00100$ & $\mathrm S\ 0000\bar{1}$\\
$022\bar{2}0$ & $\bar{1}0000,\,000\bar{1}0,\,00100$ & $\mathrm S\ 01000$\\
$022\bar{2}1$ & $\bar{1}0000,\,000\bar{1}0,\,00100$ & $\mathrm S\ 00001$\\
$022\bar{2}2$ & $\bar{1}0000,\,000\bar{1}0$ & $\mathrm S\ 00001$\\
$0222\bar{2}$ & $\bar{1}0000,\,0000\bar{1}$ & $\mathrm S\ 00010$\\
$0222\bar{1}$ & $\bar{1}0000,\,00010,\,00100$ & $\mathrm S\ 0000\bar{1}$\\
$02220$ & $\bar{1}0000,\,00010,\,00100$ & $\mathrm S\ 01000$\\
$02221$ & $\bar{1}0000,\,00010,\,00100$ & $\mathrm S\ 00001$\\
$02222$ & $\bar{1}0000,\,00001$ & $\mathrm S\ 00010$\\
$1\bar{2}\bar{2}\bar{2}\bar{2}$ & $0\bar{1}000,\,00\bar{1}00,\,000\bar{1}0$ & $\mathrm S\ 0000\bar{1}$\\
$1\bar{2}\bar{2}\bar{2}\bar{1}$ & $0\bar{1}000,\,00\bar{1}00,\,000\bar{1}0$ & $\mathrm S\ 0000\bar{1}$\\
$1\bar{2}\bar{2}\bar{2}0$ & $0\bar{1}000,\,00\bar{1}00,\,000\bar{1}0$ & $\mathrm P$\\
$1\bar{2}\bar{2}\bar{2}1$ & $0\bar{1}000,\,00\bar{1}00,\,000\bar{1}0$ & $\mathrm S\ 00001$\\
$1\bar{2}\bar{2}\bar{2}2$ & $0\bar{1}000,\,00\bar{1}00,\,000\bar{1}0$ & $\mathrm S\ 00001$\\
$1\bar{2}\bar{2}2\bar{2}$ & $0\bar{1}000,\,00\bar{1}00$ & $\mathrm S\ 0000\bar{1}$\\
$1\bar{2}\bar{2}2\bar{1}$ & $0\bar{1}000,\,00\bar{1}00$ & $\mathrm S\ 0000\bar{1}$\\
$1\bar{2}\bar{2}20$ & $0\bar{1}000,\,00\bar{1}00,\,00010$ & $\mathrm P$\\
$1\bar{2}\bar{2}21$ & $0\bar{1}000,\,00\bar{1}00$ & $\mathrm S\ 00001$\\
$1\bar{2}\bar{2}22$ & $0\bar{1}000,\,00\bar{1}00$ & $\mathrm S\ 00001$\\
$1\bar{2}00\bar{2}$ & $0\bar{1}000,\,0000\bar{1}$ & $\mathrm P$\\
$1\bar{2}00\bar{1}$ & $0\bar{1}000$ & $\mathrm C\ 00002|0|11100$\\
$1\bar{2}000$ & $0\bar{1}000$ & $\mathrm C\ 00011|1|00101$\\
$1\bar{2}001$ & $0\bar{1}000$ & $\mathrm C\ 00002|0|11100$\\
$1\bar{2}002$ & $0\bar{1}000,\,00001$ & $\mathrm P$\\
$1\bar{2}2\bar{2}\bar{2}$ & $0\bar{1}000,\,000\bar{1}0$ & $\mathrm S\ 0000\bar{1}$\\
$1\bar{2}2\bar{2}\bar{1}$ & $0\bar{1}000,\,000\bar{1}0$ & $\mathrm S\ 0000\bar{1}$\\
$1\bar{2}2\bar{2}0$ & $0\bar{1}000,\,000\bar{1}0,\,00100$ & $\mathrm P$\\
$1\bar{2}2\bar{2}1$ & $0\bar{1}000,\,000\bar{1}0$ & $\mathrm S\ 00001$\\
$1\bar{2}2\bar{2}2$ & $0\bar{1}000,\,000\bar{1}0$ & $\mathrm S\ 00001$\\
$1\bar{2}22\bar{2}$ & $0\bar{1}000,\,0000\bar{1}$ & $\mathrm S\ 00010$\\
$1\bar{2}22\bar{1}$ & $0\bar{1}000,\,00010$ & $\mathrm S\ 0000\bar{1}$\\
$1\bar{2}220$ & $0\bar{1}000,\,00010,\,00100$ & $\mathrm P$\\
$1\bar{2}221$ & $0\bar{1}000,\,00010$ & $\mathrm S\ 00001$\\
$1\bar{2}222$ & $0\bar{1}000,\,00001$ & $\mathrm S\ 00010$\\
$10\bar{2}0\bar{2}$ & $00\bar{1}00,\,0000\bar{1}$ & $\mathrm P$\\
$10\bar{2}0\bar{1}$ & $00\bar{1}00$ & $\mathrm C\ 00002|0|11100$\\
$10\bar{2}00$ & $00\bar{1}00$ & $\mathrm C\ 00011|1|00101$\\
$10\bar{2}01$ & $00\bar{1}00$ & $\mathrm C\ 00002|0|11100$\\
$10\bar{2}02$ & $00\bar{1}00,\,00001$ & $\mathrm P$\\
$100\bar{2}\bar{2}$ & $000\bar{1}0,\,0000\bar{1}$ & $\mathrm P$\\
$100\bar{2}\bar{1}$ & $000\bar{1}0$ & $\mathrm C\ 00002|2|01101$\\
$100\bar{2}0$ & $000\bar{1}0$ & $\mathrm C\ 00101|1|01010$\\
$100\bar{2}1$ & $000\bar{1}0$ & $\mathrm C\ 00002|2|01101$\\
$100\bar{2}2$ & $000\bar{1}0,\,00001$ & $\mathrm P$\\
\end{tabular}\hspace{2.5em}\begin{tabular}[t]{@{}l@{\hspace{5pt}}l@{\hspace{5pt}}l@{}}
$y$ & faces $\varphi$ & final\\ \hline
$1002\bar{2}$ & $0000\bar{1},\,00010$ & $\mathrm P$\\
$1002\bar{1}$ & $00010$ & $\mathrm C\ 00002|2|01101$\\
$10020$ & $00010$ & $\mathrm C\ 00101|3|01010$\\
$10021$ & $00010$ & $\mathrm C\ 00002|2|01101$\\
$10022$ & $00001,\,00010$ & $\mathrm P$\\
$1020\bar{2}$ & $0000\bar{1},\,00100$ & $\mathrm P$\\
$1020\bar{1}$ & $00100$ & $\mathrm C\ 00002|0|11100$\\
$10200$ & $00100$ & $\mathrm C\ 00011|1|11011$\\
$10201$ & $00100$ & $\mathrm C\ 00002|0|11100$\\
$10202$ & $00001,\,00100$ & $\mathrm P$\\
$12\bar{2}\bar{2}\bar{2}$ & $00\bar{1}00,\,000\bar{1}0$ & $\mathrm S\ 0000\bar{1}$\\
$12\bar{2}\bar{2}\bar{1}$ & $00\bar{1}00,\,000\bar{1}0$ & $\mathrm S\ 0000\bar{1}$\\
$12\bar{2}\bar{2}0$ & $00\bar{1}00,\,000\bar{1}0,\,01000$ & $\mathrm P$\\
$12\bar{2}\bar{2}1$ & $00\bar{1}00,\,000\bar{1}0$ & $\mathrm S\ 00001$\\
$12\bar{2}\bar{2}2$ & $00\bar{1}00,\,000\bar{1}0$ & $\mathrm S\ 00001$\\
$12\bar{2}2\bar{2}$ & $00\bar{1}00,\,0000\bar{1}$ & $\mathrm S\ 00010$\\
$12\bar{2}2\bar{1}$ & $00\bar{1}00,\,00010$ & $\mathrm S\ 0000\bar{1}$\\
$12\bar{2}20$ & $00\bar{1}00,\,00010,\,01000$ & $\mathrm P$\\
$12\bar{2}21$ & $00\bar{1}00,\,00010$ & $\mathrm S\ 00001$\\
$12\bar{2}22$ & $00\bar{1}00,\,00001$ & $\mathrm S\ 00010$\\
$1200\bar{2}$ & $0000\bar{1},\,01000$ & $\mathrm P$\\
$1200\bar{1}$ & $01000$ & $\mathrm C\ 00002|0|11100$\\
$12000$ & $01000$ & $\mathrm C\ 00011|1|10010$\\
$12001$ & $01000$ & $\mathrm C\ 00002|0|11100$\\
$12002$ & $00001,\,01000$ & $\mathrm P$\\
$122\bar{2}\bar{2}$ & $000\bar{1}0,\,0000\bar{1}$ & $\mathrm S\ 00100$\\
$122\bar{2}\bar{1}$ & $000\bar{1}0,\,00100$ & $\mathrm S\ 0000\bar{1}$\\
$122\bar{2}0$ & $000\bar{1}0,\,00100,\,01000$ & $\mathrm P$\\
$122\bar{2}1$ & $000\bar{1}0,\,00100$ & $\mathrm S\ 00001$\\
$122\bar{2}2$ & $000\bar{1}0,\,00001$ & $\mathrm S\ 00100$\\
$1222\bar{2}$ & $0000\bar{1},\,00010$ & $\mathrm S\ 00100$\\
$1222\bar{1}$ & $00010,\,00100$ & $\mathrm S\ 0000\bar{1}$\\
$12220$ & $00010,\,00100,\,01000$ & $\mathrm P$\\
$12221$ & $00010,\,00100$ & $\mathrm S\ 00001$\\
$12222$ & $00001,\,00010$ & $\mathrm S\ 00100$\\
$2\bar{2}\bar{2}\bar{2}\bar{2}$ & $0\bar{1}000,\,00\bar{1}00,\,000\bar{1}0$ & $\mathrm S\ 0000\bar{1}$\\
$2\bar{2}\bar{2}\bar{2}\bar{1}$ & $0\bar{1}000,\,00\bar{1}00,\,000\bar{1}0$ & $\mathrm S\ 0000\bar{1}$\\
$2\bar{2}\bar{2}\bar{2}0$ & $0\bar{1}000,\,00\bar{1}00,\,000\bar{1}0$ & $\mathrm S\ 10000$\\
$2\bar{2}\bar{2}\bar{2}1$ & $0\bar{1}000,\,00\bar{1}00,\,000\bar{1}0$ & $\mathrm S\ 00001$\\
$2\bar{2}\bar{2}\bar{2}2$ & $0\bar{1}000,\,00\bar{1}00,\,000\bar{1}0$ & $\mathrm S\ 00001$\\
$2\bar{2}\bar{2}2\bar{2}$ & $0\bar{1}000,\,00\bar{1}00$ & $\mathrm S\ 0000\bar{1}$\\
$2\bar{2}\bar{2}2\bar{1}$ & $0\bar{1}000,\,00\bar{1}00$ & $\mathrm S\ 0000\bar{1}$\\
$2\bar{2}\bar{2}20$ & $0\bar{1}000,\,00\bar{1}00,\,00010$ & $\mathrm S\ 10000$\\
$2\bar{2}\bar{2}21$ & $0\bar{1}000,\,00\bar{1}00$ & $\mathrm S\ 00001$\\
$2\bar{2}\bar{2}22$ & $0\bar{1}000,\,00\bar{1}00$ & $\mathrm S\ 00001$\\
$2\bar{2}00\bar{2}$ & $0\bar{1}000,\,0000\bar{1}$ & $\mathrm S\ 10000$\\
$2\bar{2}00\bar{1}$ & $0\bar{1}000,\,10000$ & $\mathrm C\ 00002|0|00110$\\
$2\bar{2}000$ & $0\bar{1}000,\,10000$ & $\mathrm C\ 00011|1|00101$\\
$2\bar{2}001$ & $0\bar{1}000,\,10000$ & $\mathrm C\ 00002|0|00110$\\
$2\bar{2}002$ & $0\bar{1}000,\,00001$ & $\mathrm S\ 10000$\\
$2\bar{2}2\bar{2}\bar{2}$ & $0\bar{1}000,\,000\bar{1}0$ & $\mathrm S\ 0000\bar{1}$\\
$2\bar{2}2\bar{2}\bar{1}$ & $0\bar{1}000,\,000\bar{1}0$ & $\mathrm S\ 0000\bar{1}$\\
$2\bar{2}2\bar{2}0$ & $0\bar{1}000,\,000\bar{1}0,\,00100$ & $\mathrm S\ 10000$\\
$2\bar{2}2\bar{2}1$ & $0\bar{1}000,\,000\bar{1}0$ & $\mathrm S\ 00001$\\
$2\bar{2}2\bar{2}2$ & $0\bar{1}000,\,000\bar{1}0$ & $\mathrm S\ 00001$\\
$2\bar{2}22\bar{2}$ & $0\bar{1}000,\,0000\bar{1}$ & $\mathrm S\ 00010$\\
$2\bar{2}22\bar{1}$ & $0\bar{1}000,\,00010$ & $\mathrm S\ 0000\bar{1}$\\
$2\bar{2}220$ & $0\bar{1}000,\,00010,\,00100$ & $\mathrm S\ 10000$\\
$2\bar{2}221$ & $0\bar{1}000,\,00010$ & $\mathrm S\ 00001$\\
$2\bar{2}222$ & $0\bar{1}000,\,00001$ & $\mathrm S\ 00010$\\
$20\bar{2}0\bar{2}$ & $00\bar{1}00,\,0000\bar{1}$ & $\mathrm S\ 10000$\\
$20\bar{2}0\bar{1}$ & $00\bar{1}00,\,10000$ & $\mathrm C\ 00002|0|01111$\\
$20\bar{2}00$ & $00\bar{1}00,\,10000$ & $\mathrm C\ 00011|1|00101$\\
$20\bar{2}01$ & $00\bar{1}00,\,10000$ & $\mathrm C\ 00002|0|01111$\\
$20\bar{2}02$ & $00\bar{1}00,\,00001$ & $\mathrm S\ 10000$\\
$200\bar{2}\bar{2}$ & $000\bar{1}0,\,0000\bar{1}$ & $\mathrm S\ 10000$\\
$200\bar{2}\bar{1}$ & $000\bar{1}0,\,10000$ & $\mathrm C\ 00002|2|01101$\\
$200\bar{2}0$ & $000\bar{1}0,\,10000$ & $\mathrm C\ 00101|1|01010$\\
$200\bar{2}1$ & $000\bar{1}0,\,10000$ & $\mathrm C\ 00002|2|01101$\\
$200\bar{2}2$ & $000\bar{1}0,\,00001$ & $\mathrm S\ 10000$\\
$2002\bar{2}$ & $0000\bar{1},\,00010$ & $\mathrm S\ 10000$\\
$2002\bar{1}$ & $00010,\,10000$ & $\mathrm C\ 00002|0|01000$\\
$20020$ & $00010,\,10000$ & $\mathrm C\ 00101|1|01111$\\
$20021$ & $00010,\,10000$ & $\mathrm C\ 00002|0|01000$\\
$20022$ & $00001,\,00010$ & $\mathrm S\ 10000$\\
$2020\bar{2}$ & $0000\bar{1},\,00100$ & $\mathrm S\ 10000$\\
$2020\bar{1}$ & $00100,\,10000$ & $\mathrm C\ 00002|0|11100$\\
$20200$ & $00100,\,10000$ & $\mathrm C\ 00011|1|11011$\\
$20201$ & $00100,\,10000$ & $\mathrm C\ 00002|0|11100$\\
$20202$ & $00001,\,00100$ & $\mathrm S\ 10000$\\
$22\bar{2}\bar{2}\bar{2}$ & $00\bar{1}00,\,000\bar{1}0$ & $\mathrm S\ 0000\bar{1}$\\
$22\bar{2}\bar{2}\bar{1}$ & $00\bar{1}00,\,000\bar{1}0$ & $\mathrm S\ 0000\bar{1}$\\
$22\bar{2}\bar{2}0$ & $00\bar{1}00,\,000\bar{1}0,\,01000$ & $\mathrm S\ 10000$\\
$22\bar{2}\bar{2}1$ & $00\bar{1}00,\,000\bar{1}0$ & $\mathrm S\ 00001$\\
$22\bar{2}\bar{2}2$ & $00\bar{1}00,\,000\bar{1}0$ & $\mathrm S\ 00001$\\
$22\bar{2}2\bar{2}$ & $00\bar{1}00,\,0000\bar{1}$ & $\mathrm S\ 00010$\\
$22\bar{2}2\bar{1}$ & $00\bar{1}00,\,00010$ & $\mathrm S\ 0000\bar{1}$\\
$22\bar{2}20$ & $00\bar{1}00,\,00010,\,01000$ & $\mathrm S\ 10000$\\
$22\bar{2}21$ & $00\bar{1}00,\,00010$ & $\mathrm S\ 00001$\\
$22\bar{2}22$ & $00\bar{1}00,\,00001$ & $\mathrm S\ 00010$\\
$2200\bar{2}$ & $0000\bar{1},\,01000$ & $\mathrm S\ 10000$\\
$2200\bar{1}$ & $01000,\,10000$ & $\mathrm C\ 00002|0|01011$\\
$22000$ & $01000,\,10000$ & $\mathrm C\ 00011|1|00101$\\
$22001$ & $01000,\,10000$ & $\mathrm C\ 00002|0|01011$\\
$22002$ & $00001,\,01000$ & $\mathrm S\ 10000$\\
$222\bar{2}\bar{2}$ & $000\bar{1}0,\,0000\bar{1}$ & $\mathrm S\ 00100$\\
$222\bar{2}\bar{1}$ & $000\bar{1}0,\,00100$ & $\mathrm S\ 0000\bar{1}$\\
$222\bar{2}0$ & $000\bar{1}0,\,00100,\,01000$ & $\mathrm S\ 10000$\\
$222\bar{2}1$ & $000\bar{1}0,\,00100$ & $\mathrm S\ 00001$\\
$222\bar{2}2$ & $000\bar{1}0,\,00001$ & $\mathrm S\ 00100$\\
$2222\bar{2}$ & $0000\bar{1},\,00010$ & $\mathrm S\ 00100$\\
$2222\bar{1}$ & $00010,\,00100$ & $\mathrm S\ 0000\bar{1}$\\
$22220$ & $00010,\,00100,\,01000$ & $\mathrm S\ 10000$\\
$22221$ & $00010,\,00100$ & $\mathrm S\ 00001$\\
$22222$ & $00001,\,00010$ & $\mathrm S\ 00100$\\
\end{tabular}\par}

\caption{Certificates for the $210$ vectors $y\in\mathcal Y$ (Theorem~\ref{thm:rank5}).}\label{tab:cert}
\end{table}

\section*{Use of AI tools}
The research for this paper was carried out in interaction with the AI systems Claude (Anthropic, model Opus 5.5)
and, to a lesser extent, ChatGPT (OpenAI). They were used to discuss approaches, to write and run computer searches,
and to draft the manuscript. Drafts were checked by ChatGPT and by the author.

\end{document}